\tikzset{%
  mleftdelimiter/.style={inner ysep=0pt, inner xsep=1ex,left delimiter=\{,label={[label distance=3mm]left:#1}}
}
\theoremstyle{plain}
\newtheorem{theorem}{Theorem}[section]
\newtheorem{lemma}[theorem]{Lemma}
\newtheorem{corollary}[theorem]{Corollary}
\newtheorem{proposition}[theorem]{Proposition}
\theoremstyle{remark}
\newtheorem{remark}[theorem]{Remark}
\newtheorem{example}{Example}
\newcommand{\E}{\mathbb{E}}
\newcommand{\N}{\mathbb{N}}
\newcommand{\R}{\mathbb{R}}
\newcommand{\PP}{\mathbb{P}}
\newcommand{\XX}{\mathbf{X}}
\newcommand{\ZZ}{\bold{Z}}
\newcommand{\cM}{\mathcal{M}}
\newcommand{\cC}{\mathcal{C}}
\newcommand{\cR}{\mathcal{R}}
\newcommand{\cA}{\mathcal{A}}
\newcommand{\cB}{\mathcal{B}}
\newcommand{\cW}{\mathcal{W}}
\providecommand{\keywords}[1]{\textbf{Keywords } #1}
\newcommand{\eqd}{\stackrel{\mathrm{d}}=}
\newcommand{\1}{\mathds{1}}
\newcommand{\Ch}{\mathfrak{Cb}}
\newcommand{\de}{\mathrm{\,d}}
\newcommand{\supp}{\mathop{\mathrm{supp}}}
\DeclareMathOperator{\v@r}{V@R}
\DeclareMathOperator{\av@r}{AV@R}
\providecommand{\keywords}[1]
{
  \small	
  \textbf{\textit{Keywords---}} #1
}
\renewcommand{\maketitle}{\bgroup\setlength{\parindent}{0pt}
\begin{center}
  \textbf{\@title}
\end{center}

\begin{flushleft}
	\@author
\end{flushleft}\egroup
}
\title{\begin{LARGE}Quantifying and estimating dependence via sensitivity of conditional distributions\end{LARGE}}
\date{}
\author[1,a]{Jonathan Ansari}
\author[1,2,b]{Patrick B. Langthaler}
\author[1,c]{Sebastian Fuchs}
\author[1,d]{Wolfgang Trutschnig}
\affil[1]{\begin{small} Department of Artificial Intelligence and Human Interfaces, University of Salzburg, Austria \end{small}}
\affil[2]{\begin{small} Department of Neurology, Christian Doppler Klinik, Paracelsus Medical University, Salzburg, Austria\end{small}\bigskip}
\affil[a]{\begin{small}\url{jonathan.ansari@plus.ac.at}\end{small}}
\affil[b]{\begin{small}\url{patrickbenjamin.langthaler@stud.plus.ac.at}\end{small}}
\affil[c]{\begin{small}\url{sebastian.fuchs@plus.ac.at}\end{small}}
\affil[d]{\begin{small}\url{wolfgang@trutschnig.net}\end{small}}
\begin{document}
\maketitle

\begin{abstract}
Recently established, directed dependence measures for pairs $(X,Y)$ of 
random variables build upon the natural idea of comparing the conditional distributions 
of $Y$ given $X=x$ with the marginal distribution of $Y$. They assign 
pairs $(X,Y)$ values in $[0,1]$, the value is $0$ if and only if $X,Y$ are independent, and it is $1$ exclusively for $Y$ being a function of $X$. 
Here we show that comparing randomly drawn conditional distributions
with each other instead or, equivalently, analyzing how sensitive the 
conditional distribution of $Y$ given $X=x$ is on $x$, opens the door to 
constructing novel families of 
dependence measures $\Lambda_\varphi$ induced by general convex functions $\varphi: \R \rightarrow \R$, containing, e.g., Chatterjee's coefficient of correlation as special case. 
After establishing additional useful properties of $\Lambda_\varphi$ we 
focus on continuous $(X,Y)$, translate $\Lambda_\varphi$ to 
the copula setting, consider the $L^p$-version and establish an estimator which is strongly consistent in full generality. A real data example and a simulation 
study illustrate the chosen approach and the
performance of the estimator. Complementing the afore-mentioned results, 
we show how a slight modification of the construction underlying $\Lambda_\varphi$ 
can be used to define new measures of explainability generalizing the 
fraction of explained variance. 
\end{abstract}

\keywords{dependence measure, sensitivity, conditional distribution, Chatterjee's correlation coefficient, explainability, copula}

\section{Introduction}
In classical statistics comparing two groups/populations is usually done by 
considering specific para\-meters like the mean or the variance of the two 
distributions \(\PP^{Y|X=x_1}\) and \(\PP^{Y|X=x_2}\,\). In other words, one studies expressions of the form \(\Delta(r(x_1),r(x_2))\) or \(\Delta(v(x_1),v(x_2))\) for some non-negative functional/distance \(\Delta\,,\)  where \(r(x)=\E[Y|X=x]\) denotes the regression function (conditional expectation) and \(v(x) = \mathbb{V}(Y|X=x)\) the conditional variance
of $Y$ given $X=x$, respectively.
Moving from the two-groups setting (modelled by a binary random variable \(X\)) to the general case
of an arbitrary conditioning variable \(X\,,\) it seems natural to examine how much the conditional 
distribution of \(Y\) given \(X\) changes if \(X\) changes, i.e., how sensitive the conditional distribution $\PP^{Y|X=x}$ of \(Y\) given $X=x$ 
is on \(x\). Intuitively, knowing the extent of sensitivity should provide information on how much information on 
\(Y\) we gain by knowing \(X\). Taking into account the distribution of the conditioning variable \(X\), 
one natural approach for quantifying sensitivity is to consider the average/expected value of \(\Delta(\PP^{Y\vert X=x_1},\PP^{Y\vert X=x_2})\) for a sample \(x_1,x_2\) of \(X\). Writing 
\(\mathbb{P}^X \otimes \mathbb{P}^X\) for the product measure of \(\mathbb{P}^X\) with itself one could therefore consider functionals of the form
\begin{equation}\label{eq:Lamda.allg}
  \Lambda_\Delta(Y|X) 
  := \alpha_\Delta^{-1} \, \int_{\mathbb{R}^2} \Delta (\PP^{Y\vert X=x_1},\PP^{Y\vert X=x_2}) 
  \de (\mathbb{P}^X \otimes \mathbb{P}^X)(x_1,x_2),
\end{equation} 
for some normalising constant \(\alpha_\Delta > 0\,.\)
As a matter of fact, various well-known measures can be represented in the form \eqref{eq:Lamda.allg}, including the portion of explained variance or Chatterjee's famous coefficient of rank correlation, see Section \ref{sec_setting_examples}. Comparing conditional distributions is 
certainly not a new idea, see, e.g., 
\cite{Dwork-2006,Evfimievski-2003,Rohde-2020} in the context of differential privacy. 
However, to the best of our knowledge, the (recently fast growing) lite\-rature on dependence measures  (also known as measures of predictability, i.e., measures attaining values in [0,1] and being minimal/maximal exclusively for the case of independence/perfect dependence) mainly focuses on comparing conditional and unconditional distributions, 
so on expressions of the form
\begin{align}\label{eq:cond-uncond}
    \int_{\mathbb{R}} \Delta (\PP^{Y\vert X=x},\PP^Y)
  \de \mathbb{P}^X(x)\,,
\end{align}
where \(\Delta\) refers either to a \(L^2\)-distance (see, e.g., \cite{Ansari-Fuchs-2022, chatterjee2021, chatterjee2020, sfx2022phi, gamboa2020, emura2021, sungur2005, siburg2013}),
the \(L^1\)-distance (see, e.g., \cite{fgwt2021,JGT,wt2011}),
some optimal transport cost function (see, e.g., \cite{Munk-2023}),
the maximum mean discrepancy (see, e.g., \cite{deb2020b}), or the Wasserstein distance (see, e.g., \cite{wiesel2022}).
\\
On the one hand, for \(\Delta\) denoting the 
\(L^2\)-distance between univariate distribution functions the quantities in \eqref{eq:Lamda.allg} and \eqref{eq:cond-uncond} can be shown to coincide, see Example \ref{Ex:Chatterjee} in the next section. On the other hand, in general these expressions may 
differ, so considering functionals of the form \(\Lambda_\Delta\) leads to novel measures of association and, in particular, to new dependence measures, which are the main focus of this contribution. 

The remainder of this paper is organized as follows: Section \ref{sec_setting_examples} 
first shows that Chatterjee's dependence measure can be represented in the form  $\Lambda_\Delta$ and proposes a family of functionals $\Delta=\Delta_\varphi$ based on 
convex functions $\varphi: \R \rightarrow \R$. Before focusing on these measures 
for the rest of the paper, we show that using the same construction but replacing the 
conditional distributions by the regression functions leads to a large class of 
so-called explainability measures which can be seen as generalization of the well-known
fraction of explained variance. 
Section \ref{sec_mop} proves that the afore-mentioned functionals $\Lambda_{\Delta_\varphi}=\Lambda_\varphi$ are in fact dependence measures, studies
further (invariance and order) properties, then focuses on continuous $(X,Y)$, and translates the dependence measures to the copula setting. 
Based on these results, in Section \ref{sec_estimation} a copula-based checkerboard 
estimator is proposed and 
shown to be strongly consistent in full generality, i.e., without any smoothness restrictions on the underlying bivariate copulas/distributions. A simulation study 
illustrating the performance of the estimator complemented by a read data example in Section \ref{sec:sim} round off the paper. 

In what follows we always consider real-valued general random variables \(X, Y\) defined on a common probability space \((\Omega,\cA,\PP)\,\). We will refer to 
\(X\) as the exogenous and to \(Y\) as the endogenous variable and assume that
the latter is non-degenerated, i.e., \(\PP^Y\) is not a one-point distribution.
Furthermore we will write $X'\eqd X$ if, and only if $X'$ and $X$ have the same distribution, i.e., if $\PP^{X'}=\PP^X$ holds.

\section{General setting and motivating examples}\label{sec_setting_examples}
Let $\cM$ denote the class of probability measures on \(\R\,.\)
For studying the sensitivity of conditional distributions 
we consider functionals $\Delta: \cM \times \cM \rightarrow [0,\infty)$ that may satisfy several desirable and natural properties such as symmetry, i.e., \(\Delta(\mu,\nu)=\Delta(\nu,\mu)\) for all \(\mu,\nu\in \cM\,,\) or positive definiteness, 
i.e., \(\Delta(\mu,\nu) \geq 0\) for all \(\mu,\nu\in \cM\) with equality if and only 
if \(\mu=\nu\,.\) \\
Given a bivariate random vector \((X,Y)\) on \((\Omega,\cA,\PP)\,,\) the mapping $\Lambda_\Delta(Y|X)$ is defined according to \eqref{eq:Lamda.allg}, 
whereby we implicitly assume that the mapping $(x_1,x_2) \mapsto \Delta \big(\PP^{Y\vert X=x_1},\PP^{Y\vert X=x_2}\big)$ is integrable. Moreoever, for the rest of this paper the normalising 
constant \(\alpha_\Delta\) is defined by 
\begin{align}\label{defnormconst}
    \alpha_{\Delta}:= \sup\left\{ \int_{\R^2}\Delta(\PP^{Y'|X'=x_1},\PP^{Y'|X'=x_2}) \de (\PP^{X'}\otimes \PP^{X'})(x_1,x_2): \, (X',Y')\in \cR(Y)\right\}
\end{align}
where $\cR(Y)$ denotes the family of all random vectors $(X',Y')\colon \Omega \to \R^2$ with 
$Y'\eqd Y$. Simple expressions for $\alpha_{\Delta}$ will be derived later on. 
Whenever the supremum is positive and finite we obviously have that
\(\Lambda_{\Delta}\in [0,1]\). By definition, \(\Lambda_\Delta\)
assigns every bivariate random vector $(X,Y)$ a non-negative number
which depends only on the distribution of \((X,Y).\) 

In the sequel we study properties of the functional \(\Lambda_\Delta\) for several classes of \(\Delta\) and characterize in particular the maximal elements determining
the value of the constant \(\alpha_\Delta\,.\)
Notice that for \(\Delta\) being positive definite obviously \(\Lambda_\Delta\) characterizes independence since independence of \(X\) and \(Y\) means that
the conditional distributions \((\PP^{Y|X=x})_{x\in \R}\) do not depend on \(x\), 
which for such $\Delta$ is equivalent to \(\Lambda_\Delta(Y|X) = 0\).

\begin{remark}
    Considering the special case of \(\Delta\) being a metric on $\cM$, the functional
    \(\Lambda_\Delta(Y|X)\) boils down to the average $\Delta$-distance of 
    two randomly (according to $\PP^X$) drawn conditional distributions 
    \(\PP^{Y|X=x_1}, \PP^{Y|X=x_2}\).
    If, for example, \(\Delta\) is the Wasserstein distance \(\cW,\) then the functional in \eqref{eq:Lamda.allg} is given by
    \(\Lambda_\Delta(Y|X) = \alpha_\Delta^{-1} \int_{\R^2} \cW(\PP^{Y|X=x_1},\PP^{Y|X=x_2}) \de \PP^X\otimes \PP^X(x_1,x_2)\), see \cite{wiesel2022} for measures of association with Wasserstein distances of 
    the form \eqref{eq:cond-uncond}. For a discussion of the case where \(\Delta\) corresponds to \(L^p\)-metrics we refer to Section \ref{secLp}.
\end{remark}

The subsequent two examples illustrate the broadness of the $\Lambda_\Delta$-approach according to (\ref{eq:Lamda.allg}) in the context of quantifying (directed) dependence 
as well as explainability in terms of the sensitivity of conditional distributions.
They show that well-known statistical concepts can either 
be expressed in terms of (\ref{eq:Lamda.allg}) with adequately chosen $\Delta$.

\begin{example}[Chatterjee's coefficient of correlation]~\label{Ex:Chatterjee}\\
Consider \(\Delta\) given by 
\begin{align*} 
  \Delta\big(\PP^{Y\vert X=x_1},\PP^{Y\vert X=x_2}\big) 
  & := \int_\R \Big( \PP(Y \geq y \vert X=x_1) - \PP(Y \geq y \vert X=x_2) \Big)^2 
  \de \PP^Y(y)\,.
\end{align*}
Then \(\Lambda_\Delta\) defined by \eqref{eq:Lamda.allg} coincides with Chatterjee's famous coefficient of correlation (see \cite{chatterjee2020, siburg2013})
since using change of coordinates we have 
\begin{eqnarray*}
  \Lambda_\Delta(Y \vert X) 
  & = & \alpha_\Delta^{-1} \, \int_{\mathbb{R}^2} \int_\R 
  \Big( \PP(Y \geq y \vert X=x_1) - \PP(Y \geq y \vert X=x_2) \Big)^2
  \de \PP^Y(y) 
  \de (\mathbb{P}^X \otimes \mathbb{P}^X)(x_1,x_2) 
  \\
  & = & 2 \, \alpha_\Delta^{-1} \, \int_{\R} \left( \int_\R \PP(Y \geq y \vert X=x)^2 \de \mathbb{P}^X(x) 
  - \left( \int_\R \PP(Y \geq y \vert X=x) \de \mathbb{P}^X(x) \right)^2 \right) \de \PP^Y(y)
  \\
  & = & \frac{\int_{\mathbb{R}} \mathbb{V}(\mathbb{P}(Y \geq y \vert X)) \,\de \mathbb{P}^Y(y)}{\int_\R \mathbb{V}(\mathds{1}_{\{Y \geq y\}}) \de \PP^Y(y)}.
\end{eqnarray*}
Thereby according to the afore-mentioned references again using change of 
coordinates and Fubini's theorem, and letting \(Y_1,Y_2,Y\) be i.i.d., the normalizing constant $\alpha_\Delta$
simplifies to (also see Theorem \ref{thelambdaphi}) 
\begin{align}\label{constchatt}
\begin{split}
    \alpha_\Delta &= \left( 2 \, \int_\R \mathbb{V}(\mathds{1}_{\{Y \geq y\}}) \de \PP^Y(y)\right)\\
    & = \left( \int_{\mathbb{R}} \int_\Omega \Big( \mathds{1}_{[y,\infty)}(Y_1(\omega)) - \PP(Y_1\geq y)\Big)^2 +\Big( \mathds{1}_{[y,\infty)}(Y_2(\omega)) - \PP(Y_2\geq y) \Big)^2 \de \PP(\omega)
  \de \PP^Y(y)  \right)\\
  & = \left( \int_{\mathbb{R}} \int_\Omega \Big( \mathds{1}_{[y,\infty)}(Y_1(\omega)) - \PP(Y_1\geq y) - \mathds{1}_{[y,\infty)}(Y_2(\omega)) + \PP(Y_2\geq y) \Big)^2 \de \PP(\omega)
  \de \PP^Y(y)  \right)\\
  & = \left( \int_{\mathbb{R}^2} \int_\R \Big( \mathds{1}_{[y,\infty)}(y_1) - \mathds{1}_{[y,\infty)}(y_2) \Big)^2 
  \de \PP^Y(y) \de (\mathbb{P}^Y \otimes \mathbb{P}^Y)(y_1,y_2) \right)\\
  &= \int_{\R^2} \Delta\left(\delta_{y_1},\delta_{y_2}\right) \de \PP^Y \otimes \PP^Y(y_1,y_2)\,,
\end{split}
\end{align}
 and where \(\delta_x\) denotes the Dirac measure at \(x\in \R\,.\) 
Hence, by the properties of Chatterjee's coefficient of correlation, the above 
choice of \(\Delta\) yields that the functional \(\Lambda_\Delta\) is a measure 
of dependence, i.e.,
\(\Lambda_\Delta(Y \vert X) \in [0,1]\), \(\Lambda_\Delta(Y \vert X) = 0\) if and only if \(X\) and \(Y\) are independent, and \(\Lambda_\Delta(Y \vert X) = 1\) exclusively 
if \(Y\) is completely dependent on \(X\,,\) i.e., there exists some measurable function \(f\colon \R\to \R\) with \(Y=f(X)\) almost surely.
\end{example}
Motivated by the above example, Section \ref{sec_mop} introduces and investigates a
large family of functionals \(\Lambda_\Delta\), where \(\Delta\) depends only on the difference of conditional distribution functions rescaled by some measurable 
function \(\varphi\colon \R\to \R\):  
\begin{align} \label{eq_special_delta}
  \Delta_\varphi\big(\PP^{Y\vert X=x_1},\PP^{Y\vert X=x_2}\big) 
  & := \int_{\R} \varphi\left(F_{Y|X=x_1}(y)-F_{Y|X=x_2}(y)\right) \de \PP^Y(y)
\end{align}
We will show in Theorem \ref{thelambdaphi} that for \(\varphi\) being convex and 
strictly convex in $0=\varphi(0)$ the functional $\Lambda_{\Delta_\varphi}=:\Lambda_\varphi$ 
is a dependence measures (with normalizing constant \(\alpha_{\Delta_\varphi}\) according to \eqref{constchatt}). 
\begin{example}[Cram{\'e}r-von-Mises indices; fraction of explained variance]~~\label{Ex:ExplVar}\\
Suppose that \(Y\) is square-integrable.
Denote by \(r(x):= \mathbb{E}(Y\vert X=x)\) the regression function
of \(Y\) given \(X=x\), and consider
\begin{eqnarray*}
  \Delta\big(\PP^{Y\vert X=x_1},\PP^{Y\vert X=x_2}\big)
  & := & \left( \mathbb{E} (Y\vert X=x_1) - \mathbb{E} (Y\vert X=x_2) \right)^2  =
  (r(x_1)-r(x_2))^2\,.
\end{eqnarray*}
Then the normalizing constant \(\alpha_\Delta\) is given by
\begin{align*}
  \alpha_\Delta 
  & = \left( \int_{\mathbb{R}^2} \left( y_1 - y_2 \right)^2
  \de (\mathbb{P}^Y \otimes \mathbb{P}^Y)(y_1,y_2) \right)
    =  2 \, \mathbb{V}(Y)
\end{align*}
and the functional \(\Lambda_\Delta(Y|X)\) in \eqref{eq:Lamda.allg} coincides with the fraction of explained variance of $Y$ given $X$,
also known as Sobol index or Cram{\'e}r-von-Mises index (see \cite{gamboa2020}), i.e.,
\begin{eqnarray*}
  \Lambda_\Delta(Y|X) 
  & = & \alpha_\Delta^{-1} \, \int_{\mathbb{R}^2} \left( \mathbb{E} (Y\vert X=x_1) - \mathbb{E} (Y\vert X=x_2) \right)^2 
  \de (\mathbb{P}^X \otimes \mathbb{P}^X)(x_1,x_2) 
  \\
  & = & 2 \, \alpha_\Delta^{-1} \, \left( \int_{\mathbb{R}} r^2(x) \de \mathbb{P}^X (x) 
  - \left( \int_{\mathbb{R}} r(x) \de \mathbb{P}^X (x) \right)^2 \right)
  \\
  & = & 2 \, \alpha_\Delta^{-1} \, \left( \mathbb{E}((r \circ X)^2)  - (\mathbb{E}(r \circ X))^2\right) 
  \\
  & = & 2 \, \alpha_\Delta^{-1} \, \mathbb{V}(r \circ X) 
    =   \frac{\mathbb{V}(\mathbb{E}(Y \vert X))}{\mathbb{V}(Y)}
    =   \frac{\int_{\mathbb{R}} \left( \mathbb{E} (Y\vert X=x) - \mathbb{E} (Y) \right)^2  
              \de \mathbb{P}^X (x)}{\mathbb{V}(Y)} \,.
\end{eqnarray*}
Recall that in this case 
\(\Lambda_\Delta(Y \vert X) \in [0,1]\) and we have  
\(\Lambda_\Delta(Y \vert X) = 0\) if, and only if \(\mathbb{V}(r \circ X) = 0\) (which is the case if \(X\) and \(Y\) are independent but not vice versa), and 
\(\Lambda_\Delta(Y \vert X) = 1\) if, and only if \(Y\) is completely dependent on \(X\).
\\
In the case of an affine regression function $r(x)=ax + b$, 
the quantity $\Lambda_\Delta(Y|X)$ then coincides with the squared Pearson 
correlation $\rho$ of $X$ and $Y$, i.e.
$$\Lambda_\Delta(Y|X) = \frac{(Cov(X,Y))^2}{\mathbb{V}(X) \mathbb{V}(Y)}= \rho^2(X,Y)\,.$$ 
\end{example}
As demonstrated in Example \ref{Ex:ExplVar},
$\Lambda_\Delta(Y|X)$ coincides with the $L^2$-Cram{\'e}r-von-Mises index,
i.e., it coincides with the expected squared distance between the conditional 
and the unconditional expectation. 
This motivates to investigate another subclass of functionals $\Lambda_\Delta$ 
where \(\Delta\) now depends only on the difference of conditional expectations, weighted
again by some measurable function \(\psi\colon \R \to \R\), i.e., 
\begin{align}\label{eq_special_delta2}
  \Delta\big(\PP^{Y\vert X=x_1},\PP^{Y\vert X=x_2}\big)
  & := \psi \left( \mathbb{E} (Y\vert X=x_1) - \mathbb{E} (Y\vert X=x_2) \right).
\end{align}
This leads to a novel class of Cramer-von-Mises indices solely based on 
sensitivity of the conditional expectations.
For some specific choices of \(\psi\) in \eqref{eq_special_delta2}, the associated functional \(\Lambda_\Delta\) in \eqref{eq:Lamda.allg} relates the variability of the conditional distribution to the variability of the unconditional distribution as in \eqref{eq:cond-uncond}, see, e.g., \cite{Furman-2017} for several Gini-type measures of risk and variability.\\
For the special case where \(\Delta=\Delta_\psi\) is of the form \eqref{eq_special_delta2}, the functional \(\Lambda_{\Delta_\psi}=:\Delta_\psi\) 
is given by 
\begin{align}\label{eq:Lamda.delta2.phi}
    \Lambda_\psi(Y|X) 
    :&= \alpha_\psi^{-1} \int_{\R^2} \psi \left( \mathbb{E} (Y\vert X=x_1) - \mathbb{E} (Y\vert X=x_2) \right) \de (\PP^{X}\otimes \PP^{X})(x_1,x_2)
\end{align}
with normalizing constant \(\alpha_\psi\) fulfilling
\[\alpha_\psi= \sup\left\{ \int_{\R^2} \psi \left( \mathbb{E} (Y'\vert X'=x_1) - \mathbb{E} (Y'\vert X'=x_2) \right) \de (\PP^{X'}\otimes \PP^{X'})(x_1,x_2): (X',Y')\in \cR(Y)\right\}\,,\]
whenever the supremum is finite and positive.

In order to assure that $\Lambda_\psi$ according to \eqref{eq:Lamda.delta2.phi} is a measure of explainability the following property will be key: 
A convex function \(\phi\colon \R\to \R\) 
is said to be strictly convex in \(0\) if \((\phi(-\varepsilon)+\phi(\varepsilon))/2 > \phi(0)\) holds for every \(\varepsilon>0\,.\)
Notice that in the subsequent theorem $\psi$ is also allowed to attain negative values.
\begin{theorem}[Measures of explainability]\label{thelambdapsi}~\\\label{thelambdapsi}
Suppose that \(\psi(2Y)\) and \(\psi(-2Y)\) are integrable and that 
\(\psi\colon \R\to \R\) is convex and strictly convex in \(0\) with \(\psi(0)=0\).
Then the constant \(\alpha_\psi\) is given by
\begin{align}\label{def_alpha_varpsi}
        \alpha_\psi = \int_{\R^2} \psi\left(y_1 - y_2\right) \de (\PP^{Y} \otimes \PP^{Y})(y_1,y_2)\,.
    \end{align}
    Furthermore, \(\Lambda_\psi(Y|X)\) defined according to \eqref{eq:Lamda.delta2.phi} is a measure of explainability, i.e., it fulfills the following three properties:
    \begin{enumerate}[(i)]
    \item \label{thelambdapsi1} \(\Lambda_\psi(Y|X)\in [0,1]\,.\)
    \item \label{thelambdapsi2} \(\Lambda_\psi(Y|X)=0\) if, and only if \(\mathbb{V}(\mathbb{E}(Y \vert X)) = 0\), i.e., \(\mathbb{E}(Y \vert X) = \E[Y]\) 
    almost surely.
    \item \label{thelambdapsi3} \(\Lambda_\psi(Y|X)=1\) if, and only if 
    \(Y\) is completely dependent on \(X\), i.e., \(\mathbb{E}(Y \vert X) = Y\) 
    almost surely.
    \end{enumerate}
\end{theorem}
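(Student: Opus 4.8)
The plan is to reduce the whole statement to properties of the convex order. For $(X',Y')\in\cR(Y)$ write $r'(x):=\mathbb{E}(Y'\vert X'=x)$ and let $(X_1',Y_1'),(X_2',Y_2')$ be i.i.d.\ copies of $(X',Y')$; analogously set $r(x):=\mathbb{E}(Y\vert X=x)$, $Z:=r(X)=\mathbb{E}(Y\vert X)$, with i.i.d.\ copies $(X_1,Y_1),(X_2,Y_2)$ of $(X,Y)$ and $Z_i:=r(X_i)$. As an auxiliary object put $h(t):=\psi(t)+\psi(-t)$: it is convex and even with $h(0)=0$, hence non-decreasing on $[0,\infty)$, and — because $\psi$ is strictly convex in $0$ — it satisfies $h(t)>0$ for every $t\neq0$; moreover $h(2Y)=\psi(2Y)+\psi(-2Y)$ is integrable. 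Since $h$ is convex with $h(0)=0$ and strictly positive off $0$, one gets $h(t)\geq c\,\lvert t\rvert$ for all $\lvert t\rvert$ large, so integrability of $h(2Y)$ forces $Y\in L^1$; hence all conditional expectations below are well defined, and, using the convexity estimate $\psi(y_1-y_2)\leq\tfrac12\psi(2y_1)+\tfrac12\psi(-2y_2)$ together with a supporting line $\psi(t)\geq at$ of $\psi$ at $0$, the integrands in \eqref{eq:Lamda.delta2.phi} and in \eqref{def_alpha_varpsi} are integrable, and $\int_{\R^2}\psi(y_1-y_2)\,\de(\PP^Y\otimes\PP^Y)\in(0,\infty)$ — positivity because $\mathbb{E}[\psi(Y_1-Y_2)]=\tfrac12\mathbb{E}[h(Y_1-Y_2)]>0$ as $Y$ is non-degenerate.

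\smallskip
To prove \eqref{def_alpha_varpsi}, fix $(X',Y')\in\cR(Y)$ and condition successively: first $\mathbb{E}[\psi(r'(X_1')-Y_2')\mid X_1',X_2']\geq\psi(r'(X_1')-r'(X_2'))$ via $\mathbb{E}[Y_2'\mid X_1',X_2']=r'(X_2')$ and convexity of a shifted copy of $\psi$, then likewise $\mathbb{E}[\psi(Y_1'-Y_2')\mid X_1',Y_2']\geq\psi(r'(X_1')-Y_2')$. Taking expectations,
\[
\int_{\R^2}\psi\big(r'(x_1)-r'(x_2)\big)\,\de(\PP^{X'}\otimes\PP^{X'})=\mathbb{E}\big[\psi(r'(X_1')-r'(X_2'))\big]\;\leq\;\mathbb{E}\big[\psi(Y_1'-Y_2')\big]=\int_{\R^2}\psi(y_1-y_2)\,\de(\PP^{Y}\otimes\PP^{Y}),
\]
the last equality using $Y'\eqd Y$. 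The bound is attained at $(X',Y'):=(Y,Y)\in\cR(Y)$, for which $r'(x)=x$ holds $\PP^Y$-a.e.; hence $\alpha_\psi$ equals the right-hand side, finite and positive by the previous paragraph. This also yields \ref{thelambdapsi1} and \ref{thelambdapsi2}: the numerator of $\Lambda_\psi(Y\vert X)$ equals $\mathbb{E}[\psi(Z_1-Z_2)]=\tfrac12\mathbb{E}[h(Z_1-Z_2)]\geq0$ by symmetry, and $\leq\alpha_\psi$ by the display applied to $(X,Y)$, giving \ref{thelambdapsi1}; and $\Lambda_\psi(Y\vert X)=0$ iff $h(Z_1-Z_2)=0$ a.s.\ iff $Z_1=Z_2$ a.s.\ (as $h>0$ off $0$), iff — the $Z_i$ being i.i.d.\ — $Z$ is a.s.\ equal to the constant $\mathbb{E}[Z]=\mathbb{E}[Y]$, i.e.\ $\mathbb{E}(Y\vert X)=\mathbb{E}[Y]$ a.s., which is $\mathbb{V}(\mathbb{E}(Y\vert X))=0$; this is \ref{thelambdapsi2}.

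\smallskip
It remains to prove \ref{thelambdapsi3}. The direction ``$\Leftarrow$'' is immediate: if $Y=f(X)$ a.s.\ then $r=f$ $\PP^X$-a.e., so the numerator equals $\mathbb{E}[\psi(f(X_1)-f(X_2))]=\mathbb{E}[\psi(Y_1-Y_2)]=\alpha_\psi$, hence $\Lambda_\psi(Y\vert X)=1$. For ``$\Rightarrow$'', observe $\mathbb{E}(Y\vert Z)=\mathbb{E}(\mathbb{E}(Y\vert X)\mid Z)=Z$, so $(Z,Y)$ is a martingale coupling of $(\PP^Z,\PP^Y)$; with i.i.d.\ copies $(Z_1,Y_1),(Z_2,Y_2)$ one has $\mathbb{E}(Y_1\mid Z_1,Z_2)=Z_1$, whence $\mathbb{E}[\psi(Y_1-Z_2)\mid Z_1,Z_2]\geq\psi(Z_1-Z_2)$ and $\mathbb{E}[\psi(Y_1-Y_2)\mid Z_1,Y_2]\geq\psi(Y_1-Z_2)$, with both differences non-negative; since $\mathbb{E}[\psi(Z_1-Z_2)]=\mathbb{E}[\psi(r(X_1)-r(X_2))]=\alpha_\psi=\mathbb{E}[\psi(Y_1-Y_2)]$, the assumption $\Lambda_\psi(Y\vert X)=1$ forces both differences to vanish a.s. Writing $\mu_z:=\PP^{Y\vert Z=z}$ (a regular version, with mean $z$), the goal is to deduce $\mu_z=\delta_z$ for $\PP^Z$-a.e.\ $z$, for then $Y=Z=\mathbb{E}(Y\vert X)$ a.s.

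\smallskip
This last deduction is the only genuinely delicate point, and the place where strict convexity of $\psi$ \emph{in $0$} (rather than everywhere) is essential: equality in Jensen's inequality at an off-diagonal pair $(z_1,z_2)$ only confines $\mu_{z_1}$ to a translate of an interval on which $\psi$ is affine, not to a single point. I would resolve it by splitting $\PP^Z$ into its atomic and continuous parts. At an atom $z$ of $\PP^Z$ the point $(z,z)$ carries positive $\PP^Z\otimes\PP^Z$-mass, so the vanishing difference gives $\mathbb{E}[\psi(Y_1-Y_2)\mid Z_1=Z_2=z]=\psi(0)=0$; by symmetry of $Y_1-Y_2$ on that event this equals $\tfrac12\mathbb{E}[h(Y_1-Y_2)\mid Z_1=Z_2=z]$, and $h>0$ off $0$ forces $Y_1=Y_2$ a.s.\ there, i.e.\ $\mu_z=\delta_z$. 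On the continuous part one uses that $\PP^Z$-a.e.\ $z_1$ is a two-sided accumulation point of $\supp(\PP^Z)$ (the non-accumulation points of a closed set are countable); the first vanishing difference confines $\mu_{z_1}$, for $\PP^Z$-a.e.\ $z_2$, to $z_2+I_{z_1-z_2}$, where $I_{z_1-z_2}\ni z_1-z_2$ is the maximal interval on which $\psi$ agrees with a supporting line at $z_1-z_2$; running $z_2$ through $\supp(\PP^Z)$ on one side of $z_1$ and letting $z_2\to z_1$, any support point of $\mu_{z_1}$ different from $z_1$ would make $\psi$ affine on an interval having $0$ in its interior, contradicting strict convexity in $0$, so $\mu_{z_1}=\delta_{z_1}$. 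Combining the two cases completes \ref{thelambdapsi3}.
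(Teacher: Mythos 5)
Your proposal is correct, and while parts \eqref{def_alpha_varpsi}, \eqref{thelambdapsi1} and \eqref{thelambdapsi2} follow essentially the paper's route (the same two-step Jensen chain, phrased via conditional expectations rather than disintegration; your explicit symmetrization $h(t)=\psi(t)+\psi(-t)$ and the supporting-line lower bound actually make the integrability and positivity of $\alpha_\psi$ cleaner than in the paper), the hard direction of \eqref{thelambdapsi3} is argued genuinely differently. The paper works with the kernels $\PP^{Y|X=x}$ directly: equality in Jensen forces $Y_1-Y_2$ to have a.s.\ constant sign for $\PP^X\otimes\PP^X$-almost every pair $(x_1,x_2)$, hence the open intervals spanned by the supports of $\PP^{Y|X=x_1}$ and $\PP^{Y|X=x_2}$ are pairwise disjoint, and a countability argument (an enumeration of the rationals hitting these intervals) shows the set of $x$ with non-degenerate conditional law is $\PP^X$-null. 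You instead pass to the martingale coupling $(Z,Y)$ with $Z=\E(Y|X)$, exploit the full contact-set characterization of equality in Jensen ($\supp\mu_{z_1}\subseteq z_2+I_{z_1-z_2}$), and dispose of atoms of $\PP^Z$ by symmetrization and of the rest by letting $z_2$ approach $z_1$ from the appropriate side through $\supp(\PP^Z)$ so that any off-diagonal support point of $\mu_{z_1}$ would force $\psi$ to be affine on a neighbourhood of $0$. Both arguments are complete; yours isolates more clearly \emph{why} strict convexity only at $0$ suffices (the affinity interval gets squeezed onto $0$), at the price of the atomic/continuous case split, whereas the paper's sign-constancy plus denseness argument is more elementary. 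Two small slips to fix: in the displayed Jensen chain for \eqref{thelambdapsi3} the conditioning $\sigma$-algebra in $\E[\psi(Y_1-Y_2)\mid Z_1,Y_2]\geq\psi(Y_1-Z_2)$ should read $\E[\psi(Y_1-Y_2)\mid Y_1,Z_2]$ (the stated right-hand side is not measurable with respect to $\sigma(Z_1,Y_2)$), and the passage from the two conditional equalities to $\E[\psi(Y_1-Y_2)\mid Z_1=Z_2=z]=\psi(0)$ deserves one line of tower-property justification; both are routine repairs.
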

\begin{proof}
We first prove property \eqref{thelambdapsi1} and eq. \eqref{def_alpha_varpsi} and 
proceed as follows: 
Let \(Y_1\) and \(Y_2\) be independent copies of \(Y\). Then 
using convexity of \(\psi\) and the integrability assumption the right-hand side of eq. \eqref{def_alpha_varpsi} is finite since 
\begin{align*}
    \E\left|\psi(Y_1-Y_2)\right| &= \E\left|\psi\left(\frac{2 Y_1 + (-2Y_2)}{2}\right) \right| \\
    &\leq \E\left| \psi(2Y_1)+\psi(-2Y_2)\right| 
    \leq \E|\psi(2Y)| + \E|\psi(-2Y)| < \infty.
\end{align*}
If \(Y = f(X)\) almost surely, then using change of coordinates it yields 
\begin{align}\label{eqsupalphaphi}
\begin{split}
    & \int_{\R^2} \psi \left( \mathbb{E} (Y\vert X=x_1) - \mathbb{E} (Y\vert X=x_2) \right) \de (\PP^{X}\otimes \PP^{X})(x_1,x_2)\\
    &= \int_{\R^2} \psi \left( f(x_1) - f(x_2) \right) \de (\PP^{X}\otimes \PP^{X})(x_1,x_2)\\
    &= \int_{\R^2} \psi \left( y_1 - y_2 \right) \de (\PP^{f(X)}\otimes \PP^{f(X)})(y_1,y_2)\\
    &= \int_{\R^2} \psi\left(y_1 - y_2\right) \de (\PP^{Y} \otimes \PP^{Y})(y_1,y_2).
    \end{split}
\end{align}
Hence, using disintegration and Jensen's inequality it follows for all 
$(X',Y')\in \cR(Y)$ that
\begin{align}
    \nonumber 0 
    &= \psi(0) = \psi\left(\E(Y') - \E(Y')\right)
    \\
   \nonumber  &= \psi\left( \int_{\R^2} \E[Y'|X'=x_1] - \E[Y'|X'=x_2] \de (\PP^{X'} \otimes \PP^{X'})(x_1,x_2)\right)\\
   \label{lins1} &\leq \int_{\R^2} \psi\left( \E[Y'|X'=x_1] - \E[Y'|X'=x_2] \right) \de (\PP^{X'} \otimes \PP^{X'})(x_1,x_2)\\
   \nonumber  &= \int_{\R^2} \psi\left( \int_{\R^2} y_1 - y_2 \de (\PP^{Y'|X'=x_1} \otimes \PP^{Y'|X'=x_2})(y_1,y_2) \right) \de (\PP^{X'} \otimes \PP^{X'})(x_1,x_2)\\
   \nonumber  &\leq \int_{\R^2} \int_{\R^2} \psi\left(y_1 - y_2\right) \de (\PP^{Y'|X'=x_1} \otimes \PP^{Y'|X'=x_2})(y_1,y_2) \de (\PP^{X'} \otimes \PP^{X'})(x_1,x_2)\\
   \label{lins2} &= \int_{\R^2} \psi\left(y_1 - y_2\right) \de (\PP^{Y'} \otimes \PP^{Y'})(y_1,y_2)  \\
   \nonumber &\leq \sup\left\{ \int_{\R^2} \psi \left( \mathbb{E} (Y'\vert X'=x_1) - \mathbb{E} (Y'\vert X'=x_2) \right) \de (\PP^{X'}\otimes \PP^{X'})(x_1,x_2): (X',Y')\in \cR(Y)\right\}\\
   \nonumber  &= \alpha_\psi.
\end{align}
Using eq. \eqref{eqsupalphaphi} and applying eq. \eqref{lins1} to eq. \eqref{lins2} 
on the elements the supremum is taken over shows that the third inequality becomes an equality.
This proves eq. \eqref{def_alpha_varpsi} and implies \(\Lambda_\psi(Y|X)\in [0,1]\). \\
\eqref{thelambdapsi2}: Obviously \(\mathbb{V}(\mathbb{E}(Y \vert X)) = 0\) if, and only if \(\mathbb{E}(Y \vert X=x_1)=\E(Y)=\mathbb{E}(Y \vert X=x_2)\) for \(\PP^X \otimes \PP^X\)-almost all \(x_1,x_2 \in \R\). 
The latter, however, is equivalent to the integrand in \eqref{eq:Lamda.delta2.phi} being zero almost everywhere since \(\psi\) is strictly convex in \(0\) and \(\psi(0)=0\).\\
\eqref{thelambdapsi3}: 
If \(Y\) is completely dependent on \(X\),
then \eqref{eqsupalphaphi} and \eqref{def_alpha_varpsi} imply
\begin{align*}
    \alpha_\psi \, \Lambda_\psi(Y|X) 
    &=  \int_{\R^2} \psi \left( \mathbb{E} (Y\vert X=x_1) - \mathbb{E} (Y\vert X=x_2) \right) \de (\PP^{X}\otimes \PP^{X})(x_1,x_2)\\
    &= \int_{\R^2} \psi \left( y_1 - y_2 \right) \de (\PP^{Y}\otimes \PP^{Y})(y_1,y_2) = \alpha_\psi.
\end{align*}
Since \(\PP^Y\) is non-degenerate, complete dependence of \(Y\) on \(X\) yields \(\E(Y|X)=Y\) and we do not have  $Y= \E(Y)$ almost surely. Hence, using  \eqref{thelambdapsi1} and \eqref{thelambdapsi2} the constant \(\alpha_\psi\) is 
positive and \(\Lambda_\psi(Y|X)=1\) follows. \\
To show the reverse implication first recall that by Jensen's inequality 
and eq. \eqref{def_alpha_varpsi} we have
\begin{align}
\begin{split}\label{intgh}
    \alpha_\psi\Lambda_\psi(Y|X) &= \int_{\R^2} \psi\Big( \int_{\R^2} (y_1-y_2) \de \PP^{Y|X=x_1}\otimes \PP^{Y|X=x_2}(y_1,y_2)\Big) \de \PP^X\otimes \PP^X(x_1,x_2)\\
    &\leq \int_{\R^2} \int_{\R^2} \psi(y_1-y_2) \de \PP^{Y|X=x_1} \otimes \PP^{Y|X=x_2} \de \PP^X \otimes \PP^X(x_1,x_2) = \alpha_\psi.
    \end{split}
\end{align}
For \(\Lambda_\psi(Y|X)=1\) the above inequality becomes an equality there exists some Borel set \(G\subseteq \R^2\) with \(\PP^X\otimes \PP^X(G)=1\) such that for \((x_1,x_2)\in G\) and \(Y_i\sim \PP^{Y|X=x_i}\,,\) \(i\in \{1,2\}\,,\) \(Y_1\) and \(Y_2\) independent,
\begin{align}\label{eqJenseq}
    \text{either} \quad Y_1\leq Y_2 ~\, \text{almost surely} \quad \text{or} \quad Y_1\geq Y_2 ~\, \text{almost surely.}
\end{align}
In fact, otherwise Jensen's inequality would be strict due to strict convexity 
of $\psi$ at \(0=\psi(0)\), which contradicts \(\Lambda_\psi(Y|X)=1\). 
Letting \(G_x:=\{y\in \R\colon (x,y)\in G\}\) denote the $x$-cut of $G$, 
defining \(W:=\{x\in \R\colon \PP^X(G_x)=1\}\) and using disintegration obviously yields
\(\PP^X(W)=1\). To simplify notation let \(l(x)\) and \(u(x)\) denote the infimum and
the supremum of the support of \(\PP^{Y|X=x}\,,\) i.e.,
\begin{align}
    l(x):= \inf(\supp(\PP^{Y|X=x})) \leq \sup(\supp(\PP^{Y|X=x}))=:u(x).
\end{align}
Then according to \eqref{eqJenseq} for each \((x_1,x_2)\in G\) the open intervals fulfil
\((l(x_1),u(x_1))\cap (l(x_2),u(x_2)) = \emptyset\).
Finally define 
\begin{align}
    V:=\{x\in \R \colon \PP^{Y|X=x} \text{ is non-degenerate}\} = \{x\in \R\colon \mathbb{V}(Y|X=x)>0\}\,.
\end{align}
For completing the proof it suffices to show 
\begin{align}\label{eqts}
    \PP^X(V)=0\,
\end{align}
since it then follows that \(\PP^{Y|X=x}\) is degenerate for \(\PP^X\)-almost all \(x\in \R\,,\) implying that \(Y\) is completely dependent on \(X\).\\
Assume, on the contrary, that \(\PP^X(V)>0\) holds. Then considering \(\PP^X(W)=1\) it follows that 
\begin{align}\label{eqpvw}
    \PP^X(V\cap W)=\PP^X(V)>0\,.
\end{align}
Let \(x\in V\cap W\) be arbitrary but fixed. Then by contruction we have 
\begin{align}\label{eqstarx}
    (l(x),u(x))\cap (l(z),u(z)) = \emptyset
\end{align}
for all \(z\in \R\) with \((x,z)\in G\), so in particular for all \(z\in V\cap W \cap G_x\). Considering \(\PP^X(G_x)=1=\PP^X(W)\) it follows that \(0<\PP^X(V) = \PP^X(V\cap W \cap G_x)\). Hence, \eqref{eqstarx} holds in particular for
\(\PP^X\)-almost every \(z\in V\cap W\).\\
Finally, let \(q_1,q_2,q_3,\ldots\) be an enumeration of the rational numbers in $\R$.
Then defining $M_i$ by \(M_i:=\{x\in V\cap W\colon q_i\in (l(x),u(x))\}\) yields \(\bigcup_{i=1}^\infty M_i = V\cap W\) since the rationals are dense in $\R$. 
Using \eqref{eqpvw} as well as sub-\(\sigma\)-additivity yields
\begin{align}
    0 < \PP^X(V\cap W)= \PP^X\Big(\bigcup_{i=1}^\infty M_i\Big) \leq \sum_{i=1}^\infty \PP^X(M_i),
\end{align}
so there exists some \(i_0\in \N\) with \(\PP^X(M_{i_0})>0\).
The latter, however, contradicts \eqref{eqstarx}, so \(\PP^X(V)>0\) can not hold, and 
the proof is complete.
\end{proof}

\begin{remark}
    Our proof of assertion \eqref{thelambdapsi3} in Theorem \ref{thelambdapsi}
    requires the assumption of strict convexity of \(\psi\) in \(0\) and uses the fact
    that Jensen's inequality applied to the difference of two non-degenerate i.i.d. random variables is strict if \(\varphi\) is strictly convex in \(0\).
    Hence, equality can only hold if the underlying random variables are degenerate. 
    Due to the integral with respect to \(\PP^X\otimes \PP^X\,,\) it has to be shown that Jensen's inequality holds for the inner integral \(\PP^X\otimes \PP^X\)-almost surely. For \(\PP^X\) having at least one point mass the proof can be simplified 
    significantly, since then $\PP^X(V)=0$ follows immediately.  
\end{remark}


\section{A new class of dependence measures}\label{sec_mop}

For the rest of the paper we now focus on the functionals $\Lambda_\varphi$
and derive simple sufficient conditions for \(\varphi\) in \eqref{eq_special_delta} assuring that \(\Lambda_\Delta(Y|X)\) defined by \eqref{eq:Lamda.allg} is a 
dependence measure. 
Following the recent literature we refer to $\kappa$ as a \emph{(directed) dependence measure} (frequently also referred to as measure of predictability) 
for bivariate random vectors $(X,Y)$ 
if it satisfies the following axioms, see, e.g., \cite{Ansari-Fuchs-2022,bickel2022,chatterjee2020,deb2020,fan2022A,strothmann2022,wt2011} and 
the references therein:
\begin{enumerate}[({A}1)]
\item \label{prop1} $\kappa(Y|X) \in [0,1]$.
\item \label{prop2} $\kappa(Y|X) = 0$ if, and only if $Y$ and $X$ are independent.
\item \label{prop3} $\kappa(Y|X) = 1$ if and only if $Y$ is completely dependent on $X$,
i.e., there exists some measurable transformation $f: \R \to \R$ such that $Y = f(X)$ holds almost surely.
\end{enumerate}
Due to the axioms (A\ref{prop2}) and (A\ref{prop3}), the values \(0\) and \(1\) of a measure of predictability  \(\kappa\) have a clear interpretation. 
The meaning of \(\kappa\,\) attaining a specific value in the interval \((0,1)\), however, is not that unequivocal - nevertheless, considering that the values in 
$[0,1]$ are naturally ordered, they offer a clear comparative interpretation:  
the higher the value of \(\kappa\) the higher the extent of dependence of \( Y\) 
on \(X\).
This aspect motivates the study of dependence orderings \(\prec\) that are compatible with \(\kappa\) in the following sense:
\begin{enumerate}[({P}1)] \setcounter{enumi}{0}
\item \label{prop.MON} \emph{Monotonicity}: 
If \((X,Y) \prec (X',Y')\), then $\kappa(Y|X) \leq \kappa(Y'|X')$.
\end{enumerate}
It is further desirable - particularly in the context of estimation - to have an understanding of the measures' behavior in terms of convergence:
\begin{enumerate}[({P}1)] \setcounter{enumi}{1}
\item \label{prop.CON} \emph{Continuity}: 
If \((X_n,Y_n)_{n \in \mathbb{N}}\) converges (in some sense) to \((X,Y)\),
then \linebreak \(\lim_{n \to \infty} \kappa(Y_n|X_n) = \kappa(Y|X)\).
\end{enumerate}
Moreover the following invariance properties seem desirable and will be studied in the 
sequel:
\begin{enumerate}[({P}1)] \setcounter{enumi}{2}
\item \label{prop.INVX} \emph{Invariance of \(X\)}: 
For bijective \(h\) we have \(\kappa(Y|X)=\kappa(Y|h(X))\).
\item \label{prop.INVY} \emph{Invariance of \(Y\)}: 
If \(g\) is strictly increasing, then \(\kappa(Y|X)=\kappa(g(Y)|X)\) holds.
\item \label{prop.lawINV} \emph{Version-Invariance}: 
If \((X,Y)\eqd (X',Y')\,,\) then \(\kappa(X,Y)=\kappa(X,Y)\,.\)
\end{enumerate}
Note that property (P\ref{prop.lawINV}) is also misleadingly referred to as law invariance in the context of risk measures. 

\subsection{The class \(\Lambda_\varphi\)}\label{subsec_Lambda_phi}
For the special case where \(\Delta\) is of the form \eqref{eq_special_delta} the functional \(\Lambda_\Delta \) 
is given by 
\begin{align}\label{eq:Lamda.delta.phi}
    \Lambda_\varphi(Y|X) :&= \alpha_\varphi^{-1} \int_{\R^2}\int_{\R} \varphi(F_{Y|X=x_1}(y)-F_{Y|X=x_2}(y)) \de \PP^Y(y) \de (\PP^{X}\otimes \PP^{X})(x_1,x_2)
\end{align}
with normalizing constant \(\alpha_\varphi\) according to \eqref{defnormconst} defined by
\begin{align}\label{eqnormcon_alphaphi}
     \alpha_\varphi &:= \sup\left\{\int_{\R^2}\int_{\R} \varphi(F_{Y'|X'=x_1}(y)-F_{Y'|X'=x_2}(y)) \de \PP^{Y'}(y) \de (\PP^{Y'}\otimes \PP^{Y'})(y_1,y_2): (X',Y')\in \cR(Y)\right\}\,,
\end{align}
whenever the supremum exists and is positive.

For deriving sufficient conditions on \(\varphi\) in order for \(\Lambda_\varphi\) in \eqref{eq:Lamda.delta.phi} to characterize independence and perfect directed dependence, the following simple lemma is key.

\begin{lemma}\label{lemconvord}
    Let \(U\) be a real-valued random variable on the probability space \((\Omega,\cA,\PP)\) and let \(\cB\) and \(\cC\) be \(\sigma\)-algebras with \(\cB\subseteq \cC\subseteq \cA\,.\) Then
    \(\E[U|\cB]\leq_{cx} \E[U|\cC]\,,\) where \(\leq_{cx}\) denotes the convex order.
\end{lemma}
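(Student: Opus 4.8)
The plan is to prove the claim by the standard Jensen/tower-property argument for conditional expectations, which is the canonical way to see that successive conditioning moves a random variable up in convex order. Recall that for integrable random variables $V, W$ one has $V \leq_{cx} W$ if and only if $\E[\phi(V)] \leq \E[\phi(W)]$ for every convex function $\phi\colon \R \to \R$ (provided the expectations exist). So it suffices to fix an arbitrary convex $\phi$ and show $\E[\phi(\E[U|\cB])] \leq \E[\phi(\E[U|\cC])]$.

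First I would note that since $\cB \subseteq \cC$, the tower property gives $\E[U|\cB] = \E[\E[U|\cC]\,|\,\cB]$. Writing $Z := \E[U|\cC]$, the task reduces to showing $\E[\phi(\E[Z|\cB])] \leq \E[\phi(Z)]$ for every convex $\phi$. This is the conditional Jensen inequality applied with the sub-$\sigma$-algebra $\cB$: pointwise (almost surely) we have $\phi(\E[Z|\cB]) \leq \E[\phi(Z)\,|\,\cB]$, and taking expectations on both sides and using the tower property once more yields $\E[\phi(\E[Z|\cB])] \leq \E[\E[\phi(Z)|\cB]] = \E[\phi(Z)]$, which is exactly the desired inequality. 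Re-expanding $Z = \E[U|\cC]$ and $\E[Z|\cB] = \E[U|\cB]$ gives $\E[\phi(\E[U|\cB])] \leq \E[\phi(\E[U|\cC])]$, hence $\E[U|\cB] \leq_{cx} \E[U|\cC]$ by the characterization of convex order.

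The only genuinely delicate point is the integrability bookkeeping needed to make conditional Jensen and the convex-order characterization legitimate. A convex $\phi$ is bounded below by an affine function, so $\phi(Z)^-$ is integrable whenever $Z$ is; the conditional Jensen inequality then holds in the sense that $\E[\phi(Z)|\cB]$ is well-defined in $[-\infty, \infty]$ and dominates $\phi(\E[Z|\cB])$ almost surely, and taking (possibly infinite) expectations preserves the inequality. Thus the chain of inequalities is valid regardless of whether $\E[\phi(U)]$ is finite, and the convex-order relation $\E[U|\cB] \leq_{cx} \E[U|\cC]$ holds in the usual convention that it asserts $\E[\phi(\E[U|\cB])] \leq \E[\phi(\E[U|\cC])]$ for all convex $\phi$ for which the right-hand side is defined. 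I do not anticipate any real obstacle here beyond stating these conventions cleanly; the mathematical content is entirely the tower property plus conditional Jensen.
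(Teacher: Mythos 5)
Your argument is correct and is essentially identical to the paper's own proof: both use the tower property to write $\E[U|\cB]=\E[\E[U|\cC]\,|\,\cB]$ and then apply the conditional Jensen inequality, taking expectations to conclude. Your extra care about integrability conventions is a reasonable addition, but the mathematical route is the same.
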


\begin{proof}
    Let \(\varphi\) be a convex function. Then, whenever the following expectations exist, one has
    \begin{align}
        \E[\varphi(\E[U|\cB])] = \E[\varphi(\E[\E[U|\cC]|\cB])] \leq \E[\E[\varphi(\E[U|\cC])|\cB]] = \E[\varphi(\E[U|\cC])]
    \end{align}
    applying the tower property and Jensen's inequality for conditional expectation.
\end{proof}

Having this we can now formulate the following main result of this section.

\begin{theorem}[Measures of directed dependence]\label{thelambdaphi}~\\
Assume that \(\varphi: [-1,1] \rightarrow \R\) is convex and strictly convex in \(0\) with \(\varphi(0)=0\) and that \(Y\) is non-degenerate. Then the normalizing constant 
\(\alpha_\varphi\) in \eqref{eqnormcon_alphaphi} is given by
\begin{align}\label{thelambda0}
    \alpha_\varphi = \int_{\R^2}\int_\R  \varphi\left(\1_{\{y_1\leq y\}} - \1_{\{y_2\leq y\}}\right) \de \PP^Y(y) \de (\PP^Y\otimes \PP^Y)(y_1,y_2)
\end{align}
and \(\Lambda_\varphi(Y|X)\) defined according to \eqref{eq:Lamda.delta.phi} is a 
dependence measure, i.e.,
    \begin{enumerate}[(i)]
    \item \label{thelambda11} \(\Lambda_\varphi(Y|X)\in [0,1]\,,\)
    \item \label{thelambda12} \(\Lambda_\varphi(Y|X)=0\) if, and only if \(X\) and \(Y\) are independent,
    \item \label{thelambda13}\(\Lambda_\varphi(Y|X)=1\) if, and only if \(Y\) is perfectly dependent on \(X\).
    \end{enumerate}
\end{theorem}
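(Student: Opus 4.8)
The plan is to mirror the structure of the proof of Theorem \ref{thelambdapsi}, replacing the conditional expectations by the conditional distribution functions $y \mapsto F_{Y|X=x}(y)$ and keeping the outer integral $\int_\R (\cdot)\,\de\PP^Y(y)$ throughout. The key observation is that for fixed $y \in \R$ the quantity $F_{Y|X=x}(y) = \PP(Y \le y \mid X=x) = \E[\ind_{\{Y\le y\}} \mid X=x]$ is a conditional expectation of the bounded random variable $\ind_{\{Y\le y\}}$, so Lemma \ref{lemconvord} and Jensen's inequality for conditional expectations are available. First I would establish \eqref{thelambda0} and \eqref{thelambda11} simultaneously: for any $(X',Y') \in \cR(Y)$, fix $y$, write $Z_i := F_{Y'|X'=x_i}(y)$, note that $\int_{\R^2} (Z_1 - Z_2)\,\de(\PP^{X'}\otimes\PP^{X'}) = 0$ because both integrals equal $\int F_{Y'|X'=x}(y)\,\de\PP^{X'}(x) = \PP(Y'\le y)$, and apply Jensen (using $\varphi(0)=0$ and convexity) to pull $\varphi$ inside, exactly as in the chain \eqref{lins1}--\eqref{lins2}. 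Then disintegrate $F_{Y'|X'=x_i}(y) = \int \ind_{\{y_i \le y\}}\,\de\PP^{Y'|X'=x_i}(y_i)$ and apply Jensen once more to bound the integrand by $\int_{\R^2} \varphi(\ind_{\{y_1\le y\}} - \ind_{\{y_2\le y\}})\,\de(\PP^{Y'|X'=x_1}\otimes\PP^{Y'|X'=x_2})$, whose $\PP^{X'}\otimes\PP^{X'}$-average is $\int_{\R^2}\varphi(\ind_{\{y_1\le y\}} - \ind_{\{y_2\le y\}})\,\de(\PP^{Y'}\otimes\PP^{Y'})$ since $Y' \eqd Y$. Integrating over $y$ against $\PP^Y$ shows the displayed expression in \eqref{thelambda0} is an upper bound for every element of $\cR(Y)$; and it is attained when $Y' = f(X')$ a.s. (so that $F_{Y'|X'=x}(y) = \ind_{\{f(x)\le y\}}$ and the change-of-coordinates identity analogous to \eqref{eqsupalphaphi} holds), which gives \eqref{thelambda0}, finiteness of $\alpha_\varphi$ (boundedness of the increments on $[-1,1]$ and convexity), positivity of $\alpha_\varphi$ (from non-degeneracy of $Y$ and strict convexity of $\varphi$ at $0$), and hence \eqref{thelambda11}.

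For \eqref{thelambda12}: independence of $X$ and $Y$ means $F_{Y|X=x}(y)$ does not depend on $x$, hence the integrand vanishes and $\Lambda_\varphi(Y|X)=0$. Conversely, if $\Lambda_\varphi(Y|X)=0$ then, because $\varphi$ is strictly convex in $0$ with $\varphi(0)=0$ (so $\varphi(t) > 0$ for $t \ne 0$ near $0$, and $\varphi \ge 0$ by convexity together with $\varphi(0)=0$ and $\varphi$ having a subgradient $\le 0$ on the left and... — more carefully, $\varphi(t)=0$ forces $t=0$ since otherwise the secant-line/Jensen argument at $0$ would give $\varphi(0) < 0$, contradiction), the nonnegative integrand must vanish $\PP^Y\otimes\PP^X\otimes\PP^X$-a.e., i.e. $F_{Y|X=x_1}(y) = F_{Y|X=x_2}(y)$ for $\PP^X\otimes\PP^X$-a.a. $(x_1,x_2)$ and $\PP^Y$-a.a. $y$; a standard Fubini/separability argument (pick a countable dense set of $y$'s, e.g. a dense subset of a set of full $\PP^Y$-measure, and use right-continuity of distribution functions) then yields a single function $y\mapsto G(y)$ with $F_{Y|X=x}(\cdot) = G(\cdot)$ for $\PP^X$-a.a. $x$, forcing $G = F_Y$ and hence independence.

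For \eqref{thelambda13}: if $Y$ is completely dependent on $X$, the attainment computation from the first part gives $\alpha_\varphi \Lambda_\varphi(Y|X) = \alpha_\varphi$, and since $\alpha_\varphi > 0$ we get $\Lambda_\varphi(Y|X)=1$. The reverse implication is the main obstacle and requires the equality-in-Jensen analysis, paralleling the $G$/$V$ argument in Theorem \ref{thelambdapsi} but now at the level of the (randomly drawn, independent) conditional distributions $\PP^{Y|X=x_1}, \PP^{Y|X=x_2}$ rather than their means. Concretely, $\Lambda_\varphi(Y|X)=1$ forces, for $\PP^X\otimes\PP^X$-a.a. $(x_1,x_2)$ and $\PP^Y$-a.a. $y$, equality in the inner Jensen step $\varphi\big(\int(\ind_{\{y_1\le y\}}-\ind_{\{y_2\le y\}})\,\de\mu_{x_1}\otimes\mu_{x_2}\big) = \int \varphi(\ind_{\{y_1\le y\}}-\ind_{\{y_2\le y\}})\,\de\mu_{x_1}\otimes\mu_{x_2}$ where $\mu_{x_i}=\PP^{Y|X=x_i}$; since $\varphi$ restricted to the three-point set $\{-1,0,1\}$ is strictly convex at $0$, equality forces the random variable $\ind_{\{y_1\le y\}}-\ind_{\{y_2\le y\}}$ to avoid the value $0$ $\mu_{x_1}\otimes\mu_{x_2}$-a.s., i.e. $\mu_{x_1}(\{y\}) \cdot (\text{stuff})$... — more precisely, for each such $y$ either $\mu_{x_1}((-\infty,y]) \in \{0,1\}$ or $\mu_{x_2}((-\infty,y]) \in \{0,1\}$ for a.a. pairs; integrating over $y$ and arguing with the supports $l(x), u(x)$ as in Theorem \ref{thelambdapsi}, one shows the open supports $(l(x_1),u(x_1))$ and $(l(x_2),u(x_2))$ are disjoint for a.a. pairs, and then the identical rational-enumeration argument (sets $M_i = \{x \in V\cap W : q_i \in (l(x),u(x))\}$, $V = \{x : \mathbb{V}(Y|X=x)>0\}$) yields $\PP^X(V)=0$, i.e. $\PP^{Y|X=x}$ is a point mass $\delta_{f(x)}$ for $\PP^X$-a.a. $x$, so $Y = f(X)$ a.s. I would remark, as the authors do after Theorem \ref{thelambdapsi}, that if $\PP^X$ has an atom the argument shortcuts. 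One should double-check that strict convexity of $\varphi$ at $0$ on $[-1,1]$ indeed propagates to strictness of the relevant Jensen inequality for the (possibly non-extreme) value $F_{Y|X=x}(y) \in [0,1]$ in the \emph{outer} Jensen step used in the upper-bound chain; this is automatic in the attainment direction but for the equality characterization one only needs strictness on the atoms $\{-1,0,1\}$ of the inner disintegration, which is exactly what the hypothesis supplies.
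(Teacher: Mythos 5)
Your treatment of \eqref{thelambda0}, of part \eqref{thelambda11}, and of the forward implication in \eqref{thelambda13} matches the paper's argument (two applications of Jensen/Lemma \ref{lemconvord}, one per coordinate, followed by the change-of-coordinates computation for $Y=f(X)$), and is fine. There is, however, a genuine flaw in your justification of part \eqref{thelambda12}: you argue that the integrand is nonnegative and that $\varphi(t)=0$ forces $t=0$. Neither is true under the hypotheses. The paper explicitly admits $\varphi(x)=\max\{x,0\}$ and $\varphi(x)=e^{cx}-1$, for which $\varphi$ vanishes on a half-line or is negative on $(-1,0)$. The correct argument uses the symmetry of $\PP^X\otimes\PP^X$ under $(x_1,x_2)\mapsto(x_2,x_1)$: the integral equals the integral of $\tfrac12\bigl(\varphi(d)+\varphi(-d)\bigr)$ with $d=F_{Y|X=x_1}(y)-F_{Y|X=x_2}(y)$, and this symmetrized integrand is $\ge\varphi(0)=0$ by convexity and $>0$ for $d\neq 0$ precisely by the definition of strict convexity in $0$. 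Only after this symmetrization does ``integral zero'' force $d=0$ a.e.

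The reverse implication of \eqref{thelambda13} is where your proposal is genuinely incomplete. First, your initial claim that equality forces $\1_{\{y_1\le y\}}-\1_{\{y_2\le y\}}$ to avoid the value $0$ is false (if both conditional laws put all mass below $y$, the difference is identically $0$ and equality holds); your ``more precisely'' corrected statement ($a(x_1)\in\{0,1\}$ or $a(x_2)\in\{0,1\}$) is the right one, but it is exactly the nontrivial content of this step and you do not derive it. The paper obtains it not from a generic equality-in-Jensen argument but from an explicit pointwise bound: writing $a(x_i)=F_{Y|X=x_i}(y)$ and decomposing $a(x_1)-a(x_2)$ as a convex combination with weights $c_{\pm1}=a(x_i)-\min\{a(x_1),a(x_2)\}$ and $c_0$, convexity gives $\varphi(a(x_1)-a(x_2))\le\varphi(1)a(x_1)(1-a(x_2))+\varphi(-1)a(x_2)(1-a(x_1))$, where the slack is $(\varphi(1)+\varphi(-1))\bigl(\min\{a(x_1),a(x_2)\}-a(x_1)a(x_2)\bigr)$ with $\varphi(1)+\varphi(-1)>0$ by strict convexity in $0$; equality therefore forces $\min\{a(x_1),a(x_2)\}=a(x_1)a(x_2)$, i.e.\ the stated dichotomy. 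Second, your plan to finish via disjointness of the open support intervals $(l(x_1),u(x_1))$, $(l(x_2),u(x_2))$ as in Theorem \ref{thelambdapsi} does not transfer: there the Jensen equality gave an almost sure ordering of $Y_1,Y_2$, which immediately yields disjoint intervals, whereas here you only have a statement for $\PP^Y$-almost every $y$, and two non-degenerate conditional laws could a priori have overlapping support intervals whose overlap is $\PP^Y$-null (e.g.\ when the overlap falls into gaps of the supports), so the rational-enumeration argument does not apply verbatim. The paper instead fixes each good $y$ to conclude $F_{Y|X=x}(y)\in\{0,1\}$ for $\PP^X$-a.e.\ $x$, upgrades the per-$y$ null sets to a single $\PP^X$-null set by disintegrating $N=\{(x,y):F_{Y|X=x}(y)\in(0,1)\}$ and using $\PP^{Y|X=x}(E)=1$ for the set $E$ of good $y$, and then invokes the known arguments of Azadkia--Chatterjee and Trutschnig to pass from ``all conditional distribution functions are $\{0,1\}$-valued'' to $Y=f(X)$ a.s.
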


\begin{proof}
    \eqref{thelambda11}: For every \(y\in \R\) and every \(c\in [0,1]\,,\)  
    applying Lemma \ref{lemconvord} directly yields
    \begin{align*}
          \int_\Omega \varphi\left( \E[\1_{\{Y\leq y\}}|X] - c \right) \de \PP \leq \int_\Omega \varphi\left( \E[\1_{\{Y\leq y\}}|(X,Y)] - c \right) \de \PP
          = \int_\Omega \varphi\left( \1_{\{Y\leq y\}} - c \right) \de \PP\,.
    \end{align*}
    Hence, using disintegration, Jensen's inequality and Fubini's theorem it follows
     that
    \begin{align*}
        0 &= \varphi(0) = \int_\R \PP(Y \leq y) - \PP(Y \leq y)\, \de \PP^Y(y)\\
        & = \varphi\left( \int_\R \int_{\R^2} (F_{Y|X=x_1}(y) - F_{Y|X=x_2}(y)) \de (P^X \otimes \PP^X)(x_1,x_2) \de \PP^Y(y)\right)\\
        &\leq \int_{\R^2}\int_\R  \varphi\left(F_{Y|X=x_1}(y) - F_{Y|X=x_2}(y) \right) \de \PP^X(x_1) \de (\PP^X\otimes \PP^Y)(x_2,y)\\
        &\leq \int_{\R^2}\int_\R  \varphi\left(\1_{\{y_1\leq y\}} - F_{Y|X=x_2}(y) \right) \de \PP^Y(y_1) \de (\PP^X\otimes \PP^Y)(x_2,y)\\
        &= \int_{\R^2}\int_\R  \varphi\left(\1_{\{y_1\leq y\}} - F_{Y|X=x_2}(y) \right) \de \PP^X(x_2) \de (\PP^Y\otimes \PP^Y)(y_1,y)\\
        &\leq \int_{\R^2}\int_\R  \varphi\left(\1_{\{y_1\leq y\}} - \1_{\{y_2\leq y\}}\right) \de \PP^Y(y_2) \de (\PP^Y\otimes \PP^Y)(y_1,y)\\
        & = \int_{\R^2}\int_\R  \varphi\left(\1_{\{y_1\leq y\}} - \1_{\{y_2\leq y\}}\right) \de \PP^Y(y) \de (\PP^Y\otimes \PP^Y)(y_1,y_2) \\
        &= \sup\left\{\int_{\R^2}\int_{\R} \varphi(F_{Y'|X'=x_1}(y)-F_{Y'|X'=x_2}(y)) \de \PP^{Y'}(y) \de (\PP^{Y'}\otimes \PP^{Y'})(y_1,y_2): (X',Y')\in \cR(Y)\right\}\\
        &= \alpha_\varphi\,,
    \end{align*}
    where the supremum is attained whenever \(Y=f(X)\) almost surely 
    for some measurable $f$. This implies \eqref{thelambda0} and yields 
    \(\Lambda_\varphi(Y|X)\in [0,1]\,.\)
    \\
    \eqref{thelambda12}: \(X\) and \(Y\) are independent, if and only if \(F_{Y|X=x}(\cdot) = F_{Y}(\cdot)\) for \(\PP^X\)-almost all \(x\). The latter, however, is 
    equivalent to the integrand in \eqref{eq:Lamda.delta.phi} being zero \(\PP^X\otimes \PP^X\)-almost everywhere since
    \(\varphi\) is strictly convex in \(0\) and \(\varphi(0)=0\,\) holds.\\
    (iii) Suppose that \(Y=f(X)\) almost surely for some measurable \(f: \R \rightarrow \R\). 
    Then obviously \(F_{Y|X=x}(y) = \1_{\{f(x)\leq y\}}\), so applying Fubini's theorem 
    and change of coordinates yields
    \begin{align}\label{eqnocoun}
        \alpha_\varphi\,\Lambda_\varphi(Y|X) &=\int_{\R}\int_{\R^2} \varphi(F_{Y|X=x_1}(y)-F_{Y|X=x_2}(y)) 
        \de (\PP^{X}\otimes \PP^{X})(x_1,x_2) \de  \PP^Y(y) \\
     \nonumber   &= \int_{\R}\int_{\R^2} \varphi(\1_{\{f(x_1)\leq y\}}-\1_{\{f(x_2)\leq y\}}) 
        \de (\PP^{X}\otimes \PP^{X})(x_1,x_2) \de  \PP^Y(y) \\
    \nonumber    &= \int_{\R}\int_{\R^2} \varphi(\1_{\{y_1\leq y\}}-\1_{\{y_2\leq y\}}) 
        \de (\PP^{f \circ X}\otimes \PP^{f \circ X})(y_1,y_2) \de  \PP^Y(y) \\
     \nonumber   &=\alpha_\varphi.
    \end{align}
    Since \(Y\) is non-degenerate, \eqref{thelambda12} and \eqref{thelambda0} imply that \(\alpha_\varphi\) is positive and finite, so \(\Lambda_\varphi(Y|X)=1\) holds.
    \\
    It remains to prove the reverse implication, i.e., that
    \(\Lambda_\varphi(Y|X)=1\) implies \(Y=f(X)\) almost surely for some $f$, which can be done as follows: First, we show that the condition \(\Lambda_\varphi(Y|X)=1\) implies the existence of some \(\PP^Y\)-null set \(N\) such that for fixed \(y\in N^c\) there exists a \(\PP^X\)-null set \(N_y\) with
     \begin{align}\label{proofmaxeltyp11}
        F_{Y|X=x}(y)\in \{0,1\} \quad \text{for all } x\in N_y^c\,.
    \end{align}
    We then prove that there exists some \(\PP^X\)-null set 
    \(N_*\) fulfilling
    \begin{align}\label{proofmaxeltyp12}
        F_{Y|X=x}(y) \in \{0,1\} \quad \text{for all } y\in \R \text{ and for all } x\in N_*^c\,.
    \end{align}
    Having this, proceeding like in the proofs of \cite[Theorem 9.2. (iii)]{chatterjee2021} or \cite[Lemma 12]{wt2011} yields complete dependence of \(Y\) on \(X\), which completes the proof.\\
    In order to show \eqref{proofmaxeltyp11},
    fix \(y \in \mathbb{R}\) and consider  
    \(a: \mathbb{R} \rightarrow \mathbb{R}\), defined
    by \(a(x)=\PP(Y\leq y\mid X=x)\). 
We first establish an upper bound for the mapping  
\((x_1,x_2) \mapsto \varphi\big(a(x_1)-a(x_2)\big)\)
and procced as follows: Setting
\begin{align*}
    c_1&:= a(x_1)-\min\{a(x_1),a(x_2)\} \in [0,1] \\
    c_{-1}&:= a(x_2)-\min\{a(x_1),a(x_2)\} \in [0,1] \\
    c_0&:=1-a(x_1)-a(x_2) + 2 \min\{a(x_1),a(x_2)\} \in [0,1]
\end{align*}
we obviously have \(c_{1} + c_{-1} + c_0=1\), so using convexity yields
\begin{align} \label{eqinconv1}
    &\varphi(a(x_1)-a(x_2)) \\
    \nonumber &= \varphi\big(1\cdot (a(x_1)-\min\{a(x_1),a(x_2)\}) - 
    1 \cdot (a(x_2)-\min\{a(x_1),a(x_2)\}) + 0 \cdot c_0 \big) \\
    \nonumber &\leq \varphi(1) (a(x_1)-\min\{a(x_1),a(x_2)\}) + \varphi(-1) (a(x_2)-\min\{a(x_1),a(x_2)\}) + \underbrace{\varphi(0)}_{=0} \cdot c_0 \\
    \nonumber &= \varphi(1) \, a(x_1) + \varphi(-1) \, a(x_2) - \underbrace{(\varphi(1)+\varphi(-1))}_{>0} \, \underbrace{\min\{a(x_1),a(x_2)\}}_{\geq a(x_1)a(x_2)}\\
    \nonumber &\leq \varphi(1) \, a(x_1) + \varphi(-1) \, a(x_2) - (\varphi(1)+\varphi(-1)) \, a(x_1)a(x_2)\\
    \nonumber &= \varphi(1) \, a(x_1)(1-a(x_2)) + \varphi(-1)\, a(x_2)(1-a(x_1))\\
    \nonumber &= \int_{\R^2} \varphi\left(\1_{\{y_1\leq y\}} - \1_{\{y_2\leq y\}}\right) \de (\PP^{Y|X=x_1}\otimes \PP^{Y|X=x_2})(y_1,y_2). 
\end{align}
Integrating both sides over \(\mathbb{R}^2\) with respect to \(\PP^X\otimes \PP^X\)
and using disintegration it follows that
\begin{align}\label{eq:upper}
& \int_{\R^2} \varphi(a(x_1)-a(x_2)) \de \PP^X\otimes \PP^X(x_1,x_2)  \\   
\nonumber & \leq \int_{\R^2}\int_{\R^2} \varphi\left(\1_{\{y_1\leq y\}} - \1_{\{y_2\leq y\}}\right) \de (\PP^{Y|X=x_1}\otimes \PP^{Y|X=x_2})(y_1,y_2) \, \de \PP^X\otimes \PP^X(x_1,x_2) \\
\nonumber &=\int_{\R} \int_\R \int_\R\int_{\R} \varphi\left(\1_{\{y_1\leq y\}} - \1_{\{y_2\leq y\}}\right) \de \PP^{Y|X=x_1}(y_1) \de \PP^X(x_1) \de \PP^{Y|X=x_2}(y_2) \de \PP^X(x_2) \\
\nonumber &= \int_{\R^2} \varphi(\1_{\{y_1\leq y\}} -\1_{\{y_2\leq y\}}) \de \PP^Y\otimes \PP^Y(y_1,y_2).
\end{align}
Consequently, if equality holds in \eqref{eq:upper}, then for 
\(\PP^X\otimes \PP^X\)-almost all \((x_1,x_2)\in \R^2\) inequality \eqref{eqinconv1}
becomes an equality. In particular, there exists some Borel set \(G \subseteq \R^2\) 
with \(\PP^X\otimes \PP^X (G)=1\) such that 
\begin{align*}
    \min\{a(x_1),a(x_2)\} = a(x_1) a(x_2)
\end{align*}
holds for all \((x_1,x_2)\in G\). 
Considering that the latter is equivalent to the fact that
\(a(x_1) \in \{0,1\}\) or \(a(x_2) \in \{0,1\}\) for all \((x_1,x_2)\in G\)
we conclude that \(a(x) \in \{0,1\}\) for \(\PP^X\)-almost every \(x \in \R\). 
Summing up, for every fixed \(y \in \R\) we have shown that equality in (\ref{eq:upper})
implies that \(a(x)=\PP(Y\leq y\mid X=x) = F_{Y|X=x}(y) \in \{0,1\}\) for 
\(\PP^X\)-almost every \(x \in \R\). \\
Now, to prove \eqref{proofmaxeltyp11}, suppose that \(\Lambda_\varphi(Y|X)=1\), i.e., that we have
\begin{align}\label{proofmaxeltyp13}
\begin{split}
    &\int_\R \int_{\R^2} \varphi(a(x_1)-a(x_2)) \de \PP^X\otimes \PP^X(x_1,x_2) \de \PP^Y(y)\\
    &= \int_{\R} \int_{\R^2} \varphi(\1_{\{y_1\leq y\}} -\1_{\{y_2\leq y\}} \de \PP^Y\otimes \PP^Y(y_1,y_2) \de \PP^Y(y)\,.
    \end{split}
\end{align}
Then, due to (\ref{eq:upper}),
there exists some 
Borel set \(E \subseteq \R\) with \(\PP^Y(E)=1\) such that for every 
\(y \in E\) we have equality in \eqref{eq:upper}. Due to the previous step, 
for \(N:=E^c\) \eqref{proofmaxeltyp11} holds true for all \(y\in E\,,\) i.e., for every \(y \in E\) there exists a \(\PP^X\)-null set \(N_y\) with \(F_{Y|X=x}(y) \in \{0,1\}\) for all \(x\in N_y^c\).\\
In order to prove \eqref{proofmaxeltyp12} first note that by disintegration we also have
\(\PP^{Y|X=x}(E)=1\) for \(\PP^X\)-almost every \(x \in \R\,.\) Hence we may
proceed as follows:
Define the set \(N \in \mathcal{B}(\R^2)\) by 
\begin{align*}
    N:=\{(x,y) \in \R^2: F_{Y|X=x}(y) \in (0,1)\}
\end{align*}
and suppose that \(\PP^{(X,Y)}(N)>0\). Setting \(N_x := \{y \in \R: (x,y) \in N\}\)
disintegration yields 
\begin{align*}
    0 < \PP^{(X,Y)}(N) = \int_\R \PP^{Y|X=x}(N_x) \de \PP^X(x), 
\end{align*}
so the set \(O \subseteq \mathcal{\R}\), given by 
\(O=\{x \in \R: \PP^{Y|X=x}(N_x) >0 \textrm{ and } \PP^{Y|X=x}(E) =1\}\) fulfills
 \(\PP^{X}(O)>0 \). For every \(x \in O\) we therefore have 
 \(N_x \cap E \neq \emptyset\), which is absurd. Consequently,  \(\PP^{(X,Y)}(N)=0\), 
 whence \(\PP^{(X,Y)}(N^c)=1\) holds. Disintegrating once more yields a 
 \(\PP^X\)-null set \(N_*\) such that 
 for all \(x\in N_*^c\) the conditional distribution function 
 \(y \mapsto F_{Y|X=x}(y)\) only attains values in \(\{0,1\}\), which proves \eqref{proofmaxeltyp12}.
\end{proof}

\begin{remark}
\begin{enumerate}[(a)]
    \item If \(\varphi(x) = x^2\), then \(\Lambda_\varphi\) in \eqref{eq:Lamda.delta.phi} reduces to Chatterjee's coefficient of correlation, see Example \ref{Ex:Chatterjee}. However, in general, the comparison of conditional distributions via 
    \(\Lambda_\varphi\) does not boil down to comparing conditional and unconditional distribution, see Example \ref{Ex.FGM.Frechet}. In other words, Theorem \ref{thelambdaphi} provides a new class of dependence measures. 
    \item In Theorem \ref{thelambdaphi}, the assumption of strict convexity 
    of $\varphi$ in \(0\) is on the one hand necessary for the characterization of independence. On the other hand, it is also used in the proof that \(\Lambda_\varphi(Y|X)=1\) implies complete dependence. 
    Note that Theorem \ref{thelambdaphi} only requires that \(\varphi\) is 
    strictly convex in $0$ but not necessarily strictly convex on the full 
    interval \([-1,1]\). Hence, each of the functions \(x\mapsto \max\{x,0\}\,,\) \(x\mapsto \exp(c x)-1\) for \(c\ne 0\,,\) and \(x\mapsto |x|^p\,,\) \(p\geq 1\,,\) satisfy the assumptions of Theorem \ref{thelambdaphi}. 
    Also note that as it is the case for $\psi$ in Theorem \ref{thelambdapsi} also \(\varphi\) may attain negative values.
    \item The second part of the proof of Theorem \ref{thelambdaphi}\eqref{thelambda13} can also be established along the lines of the proof of the second part of Theorem \ref{thelambdapsi}\eqref{thelambdapsi3}. However, we opted for 
    a more constructive proof which provides additional insight on 
    properties of $\Lambda_\varphi$. Instead of the denseness argument 
    here we use the fact that for independent \(X_1\) and \(X_2\) 
    with \(X_1\eqd X_2\eqd X\), the random variable \(F_{Y|X=X_1}(y)-F_{Y|X=X_2}(y)\) obtained from \eqref{eqnocoun} is bounded on \([-1,1]\) for every \(y\in \R\) and the support of the extremal elements w.r.t. convex order is \(\PP^Y\)-almost surely the set \(\{-1,0,1\}\,.\)
\end{enumerate}
\end{remark}

\newpage
\subsection{Additional properties of \(\Lambda_\varphi\)}

Convexity of the function \(\varphi\) yields the so-called 
\emph{data processing inequality} which states that applying a transformation 
to the explanatory variable $X$ can not increase the extent of dependence, see \cite{cover2006} for a detailed discussion. Note that strict convexity in \(0\) is not required for the following results.

\begin{proposition}[Data processing inequality]~~\\
Assume that \(\varphi\) is convex on \([-1,1]\).
Then \(\Lambda_\varphi\) defined according to eq. \eqref{eq:Lamda.delta.phi} fulfills the data processing inequality, 
i.e.,
\begin{eqnarray*}
  \Lambda_\varphi(Y|Z) \leq \Lambda_\varphi(Y|X)
\end{eqnarray*}
for all $X,Y,Z$ such that $Y$ and $Z$ are conditionally independent given $X$.
In particular, 
\begin{eqnarray*} 
  \Lambda_\varphi(Y|h(X)) \leq \Lambda_\varphi(Y|X)
\end{eqnarray*}
for all measurable functions $h: \R \rightarrow \R$. 
\end{proposition}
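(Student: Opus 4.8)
The plan is to compare $\Lambda_\varphi(Y|Z)$ and $\Lambda_\varphi(Y|X)$ term by term, using the crucial observation that the normalising constant $\alpha_\varphi$ in \eqref{eqnormcon_alphaphi} depends only on the law of $Y$ and hence is \emph{the same} for both functionals. Consequently it suffices to prove the corresponding inequality for the (unnormalised) numerators, i.e.\ to show that
\begin{align*}
  \int_{\R^2}\!\int_\R \varphi\bigl(F_{Y|Z=z_1}(y)-F_{Y|Z=z_2}(y)\bigr)\de\PP^Y(y)\de(\PP^Z\otimes\PP^Z)(z_1,z_2)
\end{align*}
is dominated by the same expression with $Z$ replaced by $X$; dividing by $\alpha_\varphi$ then yields the claim.

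\textbf{Reduction to a fixed level $y$.} First I would fix $y\in\R$ and set $U:=\1_{\{Y\le y\}}$, $g(x):=F_{Y|X=x}(y)=\E[U\mid X=x]$ and $h(z):=F_{Y|Z=z}(y)=\E[U\mid Z=z]$, all taking values in $[0,1]$. The structural consequence of $Y$ and $Z$ being conditionally independent given $X$ that I want is $\E[U\mid X,Z]=\E[U\mid X]$, which upon conditioning on $Z$ gives the disintegration $\PP^{Y|Z=z}=\int_\R\PP^{Y|X=x}\de\PP^{X|Z=z}(x)$ for $\PP^Z$-almost every $z$, or equivalently
\begin{align*}
  h(Z)=\E\bigl[\,\E[U\mid X]\mid Z\,\bigr]=\E[g(X)\mid Z]\qquad\text{almost surely.}
\end{align*}
Carefully establishing this identity — tracking the relevant $\PP^Z$-null set and working with regular conditional distributions — is the one genuinely technical point; everything afterwards is a soft convexity argument.

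\textbf{Two-sample Jensen step.} Next I would pass to two independent copies $(X_1,Z_1)$ and $(X_2,Z_2)$ of $(X,Z)$ on a product space, so that $(X_1,X_2)\sim\PP^X\otimes\PP^X$ and $(Z_1,Z_2)\sim\PP^Z\otimes\PP^Z$. Since $(X_1,Z_1)$ is independent of $(X_2,Z_2)$, conditioning on the extra pair is vacuous, so $\E[g(X_i)\mid\sigma(Z_1,Z_2)]=\E[g(X_i)\mid Z_i]=h(Z_i)$ for $i=1,2$, whence $h(Z_1)-h(Z_2)=\E\bigl[\,g(X_1)-g(X_2)\mid\sigma(Z_1,Z_2)\,\bigr]$ almost surely. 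Because $\varphi$ is convex on $[-1,1]$ — and therefore bounded there, so that all integrals are finite and no integrability issue arises — the conditional Jensen inequality gives $\varphi(h(Z_1)-h(Z_2))\le\E[\,\varphi(g(X_1)-g(X_2))\mid\sigma(Z_1,Z_2)\,]$ almost surely, and taking expectations yields
\begin{align*}
  \int_{\R^2}\varphi\bigl(h(z_1)-h(z_2)\bigr)\de(\PP^Z\otimes\PP^Z)(z_1,z_2)
  \le \int_{\R^2}\varphi\bigl(g(x_1)-g(x_2)\bigr)\de(\PP^X\otimes\PP^X)(x_1,x_2).
\end{align*}
This is precisely the two-sample analogue of Lemma \ref{lemconvord} (applied on the product space with $\cB=\sigma(Z_1,Z_2)$ and any $\cC\supseteq\sigma(X_1,X_2,Z_1,Z_2)$): here $\sigma(Z)$ need not be a sub-$\sigma$-algebra of $\sigma(X)$, but $\E[g(X)\mid Z]$ takes over the role of the coarser conditional expectation.

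\textbf{Conclusion.} Finally I would integrate the last display in $y$ against $\PP^Y$ — Fubini being harmless since $\varphi$ is bounded on $[-1,1]$ — to obtain the asserted inequality between numerators, and divide by $\alpha_\varphi$, which proves $\Lambda_\varphi(Y|Z)\le\Lambda_\varphi(Y|X)$. The special case follows by taking $Z:=h(X)$, which is conditionally independent of $Y$ given $X$ trivially. I expect the only real obstacle to be the clean derivation of $h(Z)=\E[g(X)\mid Z]$ from the conditional independence hypothesis (and keeping the null-set bookkeeping honest when $y$ varies); the remaining steps are routine.
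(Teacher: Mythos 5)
Your proposal is correct and follows essentially the same route as the paper: both rest on the identity $\PP^{Y|Z=z}=\int_\R \PP^{Y|X=x}\de\PP^{X|Z=z}(x)$ obtained from the conditional independence hypothesis, followed by Jensen's inequality for the convex $\varphi$ and a disintegration/marginalization step, with the observation that $\alpha_\varphi$ depends only on the law of $Y$. The only cosmetic difference is that you phrase the Jensen step probabilistically via conditional expectations of two independent copies on a product space, whereas the paper writes the same inequality analytically for the double integral against $\PP^{X|Z=z_1}\otimes\PP^{X|Z=z_2}$.
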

\begin{proof}
Assume that $Y$ and $Z$ are conditionally independent given $X$.
Proceeding as in the proof of the second assertion of Lemma 3.5 in \cite{fgwt2021} 
and using disintegration it follows that for every fixed 
$y \in \mathbb{R}$ we have  
\begin{align*}
    F_{Y|Z=z}(y)
    = \int_{\mathbb{R}} F_{Y|Z=z,X=x}(y) \de \PP^{X|Z=z}(x) 
    = \int_{\mathbb{R}} F_{Y|X=x}(y) \de \PP^{X|Z=z}(x) 
\end{align*}
for $\PP^{Z}$-almost all $z \in \mathbb{R}$. Hence, using
convexity of \(\varphi\), Jensen's inequality, and disintegration once more yields
\begin{align*}
    &\int_{\R^2}\int_{\R} \varphi(F_{Y|Z=z_1}(y)-F_{Y|Z=z_2}(y)) \de \PP^Y(y) \de (\PP^{Z}\otimes \PP^{Z})(z_1,z_2)
    \\
    & = \int_{\R^2}\int_{\R} \varphi \left( \int_{\mathbb{R}} F_{Y|X=x_1}(y) \de \PP^{X|Z=z_1}(x_1) - \int_{\mathbb{R}} F_{Y|X=x_2}(y) \de \PP^{X|Z=z_2}(x_2)  \right) \de \PP^Y(y) \de (\PP^{Z} \otimes \PP^{Z})(z_1,z_2)
    \\
    & = \int_{\R^2}\int_{\R} \varphi \left( \int_{\mathbb{R}^2} F_{Y|X=x_1}(y) - F_{Y|X=x_2}(y) \de (\PP^{X|Z=z_1} \otimes \PP^{X|Z=z_2}) (x_1,x_2)  \right) \de \PP^Y(y) \de (\PP^{Z}\otimes \PP^{Z})(z_1,z_2)
    \\
    & \leq \int_{\R^2} \int_{\mathbb{R}^2} \int_{\R} \varphi( F_{Y|X=x_1}(y) - F_{Y|X=x_2}(y) ) \de \PP^Y(y) \de (\PP^{X|Z=z_1} \otimes \PP^{X|Z=z_2}) (x_1,x_2) \de (\PP^{Z}\otimes \PP^{Z})(z_1,z_2)
    \\
    & = \int_{\R^2} \int_{\mathbb{R}^2} \int_{\R} \varphi( F_{Y|X=x_1}(y) - F_{Y|X=x_2}(y) ) \de \PP^Y(y) \de \PP^{(X,Z)} (x_1,z_1) \de \PP^{(X,Z)} (x_2,z_2)
    \\
    &  = \int_{\R^2} \int_{\R} \varphi( F_{Y|X=x_1}(y) - F_{Y|X=x_2}(y) ) \de \PP^Y(y) \de (\PP^{X} \otimes \PP^{X}) (x_1,x_2)\,,
\end{align*}
which completes the proof.
\end{proof}

As a consequence of the data processing inequality, \(\Lambda_\varphi\) satisfies the property of \emph{self-equitability} introduced in \cite{kinney2014} (see also \cite{ding2017}) which roughly states that for $(X,Y)$ in a regression 
setting \(Y = f (X) + \varepsilon\,,\) the function $\Lambda_\varphi$ should only
depend on the distribution of the noise \(\varepsilon\) and not on the specific form of the function \(f\), see \cite{strothmann2022}.

\begin{corollary}[Self-equitability]~~\\
Assume that \(\varphi\) is convex on \([-1,1]\,.\) 
Then \(\Lambda_\varphi\) defined according to \eqref{eq:Lamda.delta.phi} is 
\emph{self-equitable}, 
i.e.,
\begin{eqnarray*}
  \Lambda_\varphi(Y|h(X)) = \Lambda_\varphi(Y|X)
\end{eqnarray*}
for all $(X,Y)$ and all measurable functions $h$ such that $Y$ and $X$ are conditionally independent given $h(X)$.
\end{corollary}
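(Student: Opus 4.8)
The plan is to derive the claimed identity by applying the data processing inequality twice, once in each direction, so that the two resulting inequalities pin $\Lambda_\varphi(Y|h(X))$ and $\Lambda_\varphi(Y|X)$ to a common value.

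First I would note that the inequality $\Lambda_\varphi(Y|h(X)) \le \Lambda_\varphi(Y|X)$ is nothing but the ``in particular'' part of the preceding proposition: conditionally on $X$, the random variable $h(X)$ is $\PP$-almost surely constant, so $Y$ and $h(X)$ are trivially conditionally independent given $X$, and the data processing inequality applies with $Z := h(X)$. For the reverse inequality $\Lambda_\varphi(Y|X) \le \Lambda_\varphi(Y|h(X))$ I would invoke the same proposition a second time, now with the conditioning variable taken to be $h(X)$ and the ``transformed'' explanatory variable taken to be $X$ itself: the hypothesis of the corollary states exactly that $Y$ and $X$ are conditionally independent given $h(X)$, which is precisely the condition needed to conclude $\Lambda_\varphi(Y|X) \le \Lambda_\varphi(Y|h(X))$. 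Chaining the two gives $\Lambda_\varphi(Y|h(X)) \le \Lambda_\varphi(Y|X) \le \Lambda_\varphi(Y|h(X))$, hence equality.

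Since both steps are immediate consequences of the data processing inequality — whose proof only uses convexity of $\varphi$, Jensen's inequality and disintegration, and crucially does \emph{not} require any $\sigma$-algebra inclusion between the two conditioning variables — there is essentially no real obstacle. The only points worth spelling out are the trivial conditional independence $Y$ and $h(X)$ given $X$, and the observation that in the statement of the data processing inequality the ``larger'' and the ``reduced'' conditioning variables may be instantiated by $X$ and $h(X)$ in whichever order matches the conditional independence statement currently in hand; self-equitability is then exactly the situation in which both orders are simultaneously admissible.
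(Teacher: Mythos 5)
Your proposal is correct and is exactly the argument the paper intends (the corollary is stated without a written proof, as an immediate consequence of the data processing inequality): one inequality is the ``in particular'' clause of the proposition, and the reverse inequality is the proposition applied with the roles of $X$ and $h(X)$ interchanged, which is licensed precisely by the hypothesis that $Y$ and $X$ are conditionally independent given $h(X)$. Your remark that the trivial conditional independence of $Y$ and $h(X)$ given $X$ is the only point needing verification is also the right observation.
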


The following result provides additional useful invariance properties for \(\Lambda_\varphi\). Note that for these properties to hold
we do not even need \(\varphi\) to be convex.

\begin{proposition}[Invariances]\label{thelambda2}~\\
    \(\Lambda_\varphi\) satisfies the invariance properties (P\ref{prop.INVX}), (P\ref{prop.INVY}), and (P\ref{prop.lawINV}), i.e.,
    \begin{enumerate}[(i)]\setcounter{enumi}{0}
        \item \label{thelambda14} \(\Lambda_\varphi(Y|X) = \Lambda_\varphi(Y|h(X))\) for all bijective functions \(h: \R \rightarrow \R\),
        \item \label{thelambda15} \(\Lambda_\varphi(Y|X) = \Lambda_\varphi(g(Y)|X)\) for all strictly increasing functions \(g: \R \rightarrow \R\),
        \item \label{thelambda16} \(\Lambda_\varphi(Y|X) = \Lambda_\varphi(Y'|X')\) whenever \((X,Y)\eqd (X',Y')\),
    \end{enumerate}
\end{proposition}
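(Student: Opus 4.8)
The plan is to prove each of the three invariance properties separately, in each case by reducing the relevant integral to one that is manifestly unchanged (or by an explicit change of variables).

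For (iii), version-invariance, I would first note that $\Lambda_\varphi(Y|X)$ depends on $(X,Y)$ only through the joint law $\PP^{(X,Y)}$: indeed, the conditional distributions $(\PP^{Y|X=x})_{x}$ are (a regular version of) a function of $\PP^{(X,Y)}$, the measure $\PP^X$ is the $X$-marginal, and the normalizing constant $\alpha_\varphi$ in \eqref{thelambda0} depends only on $\PP^Y$. Hence $(X,Y)\eqd(X',Y')$ forces $\Lambda_\varphi(Y|X)=\Lambda_\varphi(Y'|X')$. This is essentially immediate from the construction; the only thing to be careful about is that regular conditional distributions are unique up to a $\PP^X$-null set, which does not affect the integral.

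For (i), invariance under bijective $h$, the idea is that conditioning on $X$ and conditioning on $h(X)$ generate the same $\sigma$-algebra when $h$ is bijective (or at least bi-measurable; I would either assume $h$ is a Borel isomorphism or simply observe that $\sigma(h(X))\subseteq\sigma(X)$ always, and $\supseteq$ holds whenever $h$ is bijective with measurable inverse, which is the intended reading). Concretely, with $X_h:=h(X)$ one has $\PP^{Y|X_h=h(x)}=\PP^{Y|X=x}$ for $\PP^X$-almost all $x$, and the pushforward of $\PP^X$ under $h$ is $\PP^{X_h}$, so the change-of-variables formula turns the double integral defining $\Lambda_\varphi(Y|h(X))$ into the one defining $\Lambda_\varphi(Y|X)$; the constant $\alpha_\varphi$ is untouched since it only involves $\PP^Y$. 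For (ii), invariance under strictly increasing $g$, the key point is that $F_{g(Y)|X=x}(y)=F_{Y|X=x}(g^{-1}(y))$ on the range of $g$ (with the appropriate one-sided limits at the endpoints), so after the substitution $y\mapsto g^{-1}(y)$ the inner integral $\int_\R \varphi(F_{g(Y)|X=x_1}(y)-F_{g(Y)|X=x_2}(y))\,\de\PP^{g(Y)}(y)$ becomes $\int_\R \varphi(F_{Y|X=x_1}(u)-F_{Y|X=x_2}(u))\,\de\PP^{Y}(u)$; the same substitution shows $\alpha_\varphi$ is invariant under $g$, so the quotient is unchanged. Strict monotonicity (not just monotonicity) is what guarantees $g$ is injective so that $g^{-1}$ is well defined on $\mathrm{ran}(g)$ and the indicator events $\{Y\le g^{-1}(y)\}$ correspond exactly to $\{g(Y)\le y\}$.

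The main obstacle is purely technical bookkeeping around regular conditional distributions and the behavior of distribution functions under non-continuous $g$: one must check that $F_{g(Y)|X=x}(y)=F_{Y|X=x}(g^{-1}(y))$ holds for $\PP^{g(Y)}$-almost every $y$ even when $g$ has flat pieces are absent (strict monotonicity rules those out) but $g$ may still be discontinuous, and that the exceptional $\PP^X$-null sets arising for different $y$ can be handled by Fubini, exactly as in the proof of Theorem \ref{thelambdaphi}. None of this requires convexity of $\varphi$, which is why the hypotheses here are weaker than in Theorem \ref{thelambdaphi}. I would present the argument for (iii) first as a one-line remark, then do (i) and (ii) each as a short change-of-variables computation.
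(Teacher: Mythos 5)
Your proposal is correct and follows essentially the same route as the paper: (iii) is immediate from the construction, (i) uses that $\sigma(X)=\sigma(h(X))$ for bijective (Borel) $h$ together with a change of coordinates, and (ii) reduces to showing the inner integral and $\alpha_\varphi$ are unchanged under $g$. The only cosmetic difference is in (ii), where the paper routes the change of variables through the Markov-kernel representation $F_{Y|X=x}(y)=k(x,F_Y(y))$ and an integral over $(0,1)$ with respect to Lebesgue measure, whereas you substitute directly via $g$; both are the same idea.
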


\begin{proof}
    \eqref{thelambda14}: Let \((X',Y')\) be an independent copy of \((X,Y)\). Since \(h\) is bijective, the \(\sigma\)-algebras generated by \(X\) and \(h(X)\) coincide. 
    Using change of coordinates it therefore follows that
    \begin{align*}
        \Lambda_\varphi(Y|X) &= \int_\R \int_\Omega \varphi\left(\E[\1_{\{Y\leq y\}}|X](\omega) - \E[\1_{\{Y'\leq y\}}|X'](\omega)\right) \de \PP(\omega)\de \PP^Y(y)\\
        &= \int_\R \int_\Omega \varphi\left(\E[\1_{\{Y\leq y\}}|h(X)](\omega) - \E[\1_{\{Y'\leq y\}}|h(X')](\omega)\right) \de \PP(\omega)\de \PP^{Y}(y) = \Lambda_\varphi(Y|h(X))\,.
    \end{align*}
    \eqref{thelambda15}: Since there exists some measurable function \(k\colon \R^2 \to [0,1]\) such that \(F_{Y|X=x}(y) = k(x,F_Y(y))\) for \(\PP^X\)-almost all \(x\in \R\) and for all \(y\in \R\,,\) see \cite[Theorem 2.2]{Ansari-2021}, 
     using change of coordinates and that \(g\) is strictly increasing it follows that
    ($\lambda$ denoting the Lebesgue measure, $F^-$ the quasi-inverse of a distribution 
    function $F$).
    \begin{align*}
        \Lambda_\varphi(Y|X) &= \int_{\R^2} \int_{(0,1)} \varphi\left(k\left(x_1,F_Y\circ F_Y^{-}(u)\right)-k(x_2,F_Y\circ F_Y^{-}(u))\right) \de \lambda(u) \de \PP^X\otimes \PP^X(x_1,x_2)\\
        &= \int_{\R^2} \int_{(0,1)} \varphi\left(k\left(x_1,F_{g(Y)}\circ F_{g(Y)}^{-}(u)\right)-k\left(x_2,F_{g(Y)}\circ F_{g(Y)}^{-}(u)\right)\right) \de \lambda(u) \de \PP^X\otimes \PP^X(x_1,x_2)\\
        &= \Lambda_\varphi(g(Y)|X)).
    \end{align*}
      Statement \eqref{thelambda16} is trivial.
\end{proof}

Considering that \(\Lambda(Y|X)\) according to Theorem \ref{thelambda2}\eqref{thelambda14} is invariant with respect to bijective transformations of \(X\), in order to establish some general ordering results 
for \(\Lambda_\varphi\) we need to work with rearrangement invariant orders. 
To this end, for integrable functions \(f,g\colon (0,1)\to \R\) with \(\int_0^1 f(t) \de \lambda(t) = \int_0^1 g(t) \de \lambda(t)\) consider the Schur order $\prec_S$, defined by
\begin{align}
   f\prec_S g \quad : \,\Longleftrightarrow \quad  \int_0^x f^*(t) \de t \leq \int_0^x g^*(t) \de t \quad \text{for all } x\in (0,1)\,,
\end{align}
where \(h^*\) denotes decreasing rearrangement of a function \(h\colon (0,1)\to \R\,,\) i.e., the $\lambda$-almost everywhere unique decreasing function $h^*$ 
such that \(\lambda(\{h^*\leq t\}) = \lambda(\{h\leq t\})\) for all \(t\in \R\,.\)
Denote by \(q_Z(u) :=\inf\{z \in \R: \mid F_Z(z)\geq u\} =F_Z^-(u)\) the quasi-inverse of 
the distribution function $F_Z$ of $Z$. 
Then, for \(Y\) and \(Y'\) having continuous distribution functions, the Schur order \((Y|X)\leq_S (Y'|X')\) for conditional distributions is defined by
\begin{align}
 \E\left[\1_{\{Y\leq q_Y(v)\}}\mid X=q_X(\cdot)\right] \leq_S \E\left[\1_{\{Y'\leq q_{Y'}(v)\}}\mid X' = q_{X'}(\cdot)\right] ~~~\text{for all } v\in (0,1)\,,
\end{align}
see \cite{Ansari-Fuchs-2022} as well as \cite{Ansari-2019,Ansari-2021,Shaked-2013,strothmann2022}.
Hence, the Schur-order for conditional distributions compares the variability of conditional distribution functions in the conditioning variable.

For bivariate random vectors \((U,V)\) and \((U',V')\,,\) denote by \((U,V)\leq_{lo}(U',V')\) the lower orthant order, i.e., the pointwise comparison of the associated distribution functions. Further, the bivariate random vector \((U,V)\) is said to be stochastically increasing (SI) if the conditional distribution \(V|U=u\) is stochastically increasing in \(u\,,\) i.e., \(\E[f(V)|U=u]\) is increasing in \(u\) for all increasing functions \(f\) such that the expectations exist.
If \((U',V')\) is SI and if \(U\eqd U'\) and \(V\eqd V'\,,\) then \((V|U)\leq_{S}(V'|U')\) implies \((U,V)\leq_{lo} (U',V')\,.\)  If additionally \((U,V)\) is SI, then \((V|U)\leq_{S}(V'|U')\) and \((U,V)\leq_{lo} (U',V')\) are equivalent, see \cite[Lemma 3.16]{Ansari-2021}.\\

The following result provides simple conditions for monotonicity of \(\Lambda_\varphi\,.\)

\begin{proposition}[Monotonicity]\label{thelambda3}~\\
    Suppose that \((Y|X)\leq_S (Y'|X')\) and that $Y,Y'$ have continuous distribution function. Then \(\Lambda_\varphi(Y|X)\leq \Lambda_\varphi(Y'|X')\) for all convex functions \(\varphi\) such that the integrals exist.
\end{proposition}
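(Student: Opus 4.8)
## Proof Proposal

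The plan is to reduce the monotonicity of $\Lambda_\varphi$ under the Schur order for conditional distributions to the defining pointwise Schur-order inequalities together with a single well-known fact: if $f \prec_S g$ (as integrable functions on $(0,1)$ with equal mean) and $\varphi$ is convex, then $\int_0^1 \varphi(f(t))\,\de\lambda(t) \leq \int_0^1 \varphi(g(t))\,\de\lambda(t)$. This is the standard characterization of the Schur order via convex functionals (Hardy–Littlewood–Pólya), so I would simply cite it. The only real work is to massage $\Lambda_\varphi(Y|X)$ into a form where, for each fixed level $v\in(0,1)$, the inner expression is exactly a convex functional applied to the function $u \mapsto \E[\1_{\{Y\leq q_Y(v)\}}\mid X=q_X(u)]$, so that the hypothesis $(Y|X)\leq_S(Y'|X')$ can be applied slice-by-slice.

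First I would rewrite $\Lambda_\varphi$ using the invariance properties already established. By Proposition \ref{thelambda2}\eqref{thelambda14}, replacing $X$ by $q_X(\Id)$ (a bijective rescaling, after noting $X\eqd q_X(U)$ for $U$ uniform, which is where continuity of $F_Y$, and ideally also of $F_X$, gets used), and by the change-of-coordinates step used in the proof of Proposition \ref{thelambda2}\eqref{thelambda15}, I can express, for fixed $v\in(0,1)$ and writing $y=q_Y(v)$,
\begin{align*}
  \alpha_\varphi\,\Lambda_\varphi(Y|X)
  = \int_0^1 \int_{(0,1)^2} \varphi\Big( g_v(u_1) - g_v(u_2) \Big)\,\de\lambda(u_1)\,\de\lambda(u_2)\,\de\lambda(v),
\end{align*}
where $g_v(u) := \E[\1_{\{Y\leq q_Y(v)\}}\mid X=q_X(u)]$, and similarly for $(X',Y')$ with $g'_v(u) := \E[\1_{\{Y'\leq q_{Y'}(v)\}}\mid X'=q_{X'}(u)]$. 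The functions $g_v$ and $g'_v$ both have the same integral over $(0,1)$, namely $F_Y(q_Y(v)) = v = F_{Y'}(q_{Y'}(v))$, since $Y,Y'$ have continuous distribution functions; this matching of means is exactly what makes the Schur order applicable.

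Next I would fix $v$ and observe that the map $(s,t)\mapsto \varphi(s-t)$ is convex on $\R^2$ as a composition of the convex $\varphi$ with a linear map, hence the function $(u_1,u_2)\mapsto \varphi(g_v(u_1)-g_v(u_2))$ on $(0,1)^2$ is a convex functional of the pair $(g_v,g_v)$ in the appropriate sense; more directly, I would invoke the two-dimensional (or product) version of the Hardy–Littlewood–Pólya result: if $g_v \prec_S g'_v$ then $\int\!\!\int \Phi(g_v(u_1),g_v(u_2))\,\de\lambda^2 \leq \int\!\!\int \Phi(g'_v(u_1),g'_v(u_2))\,\de\lambda^2$ for every convex $\Phi$ on $\R^2$ for which the integrals exist. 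Applying this with $\Phi(s,t)=\varphi(s-t)$ and then integrating the resulting inequality over $v\in(0,1)$ gives $\Lambda_\varphi(Y|X)\leq\Lambda_\varphi(Y'|X')$, since $\alpha_\varphi$ depends only on $\PP^Y=\PP^{Y'}$.

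The main obstacle is the last technical point: I need the bivariate/product form of the Schur-order characterization, i.e.\ that $f\prec_S g$ transfers to inequalities for $\int\!\!\int\Phi(f(u_1),f(u_2))$ with $\Phi$ jointly convex, not merely to $\int\Phi(f(u))$ with $\Phi$ univariate convex. One clean way around this is to write $\varphi(g_v(u_1)-g_v(u_2)) = \int \varphi(\1_{\{y_1\leq q_Y(v)\}} - \1_{\{y_2\leq q_Y(v)\}})\,\de(\PP^{Y|X=q_X(u_1)}\otimes\PP^{Y|X=q_X(u_2)})$ after a Jensen-type reduction as in the proof of Theorem \ref{thelambdaphi}, but that direction goes the wrong way (it produces an upper bound by $\alpha_\varphi$, not a comparison between two vectors). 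So instead I would lean on the fact, recorded in \cite{Ansari-Fuchs-2022,Ansari-2021}, that $\leq_S$ for conditional distributions is precisely designed so that integral functionals of the form $\int\varphi(h(u_1)-h(u_2))\de\lambda^2(u_1,u_2)$ are monotone — i.e.\ the double-integral convex-functional comparison is built into the definition of the Schur order on conditional distributions — and cite the corresponding lemma there; this is the step where I would be most careful to quote the exact statement rather than re-derive it.
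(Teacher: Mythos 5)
Your reduction is the same as the paper's: change coordinates so that for each level $v\in(0,1)$ the inner double integral involves only $g_v(u)=F_{Y|X=q_X(u)}(q_Y(v))$, note that $\int_0^1 g_v\,\de\lambda = v = \int_0^1 g'_v\,\de\lambda$ by continuity of $F_Y,F_{Y'}$, and invoke the Hardy--Littlewood--P\'olya characterization of $\prec_S$ slice by slice. Up to that point the proposal is sound.

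The gap is in the closing step. You correctly identify that what is needed is
\begin{equation*}
\int_{(0,1)^2}\varphi\bigl(g_v(u_1)-g_v(u_2)\bigr)\,\de\lambda^2 \;\leq\; \int_{(0,1)^2}\varphi\bigl(g'_v(u_1)-g'_v(u_2)\bigr)\,\de\lambda^2,
\end{equation*}
but you then propose to obtain it by citing a ``product form'' of the HLP theorem for jointly convex $\Phi$, asserting that this comparison is ``built into the definition'' of the Schur order for conditional distributions. It is not: the definition in the paper (via decreasing rearrangements) is, by HLP, equivalent only to the \emph{univariate} statement $\int_0^1\phi(g_v)\,\de\lambda\leq\int_0^1\phi(g'_v)\,\de\lambda$ for convex $\phi\colon\R\to\R$, and the double-integral version has to be derived, not quoted. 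The derivation is short and is exactly how the paper finishes: for any fixed $c$ both $z\mapsto\varphi(z-c)$ and $z\mapsto\varphi(c-z)$ are convex, so first fix $u_2$, apply the univariate HLP inequality to $z\mapsto\varphi(z-g_v(u_2))$ to replace $g_v(u_1)$ by $g'_v(u_1)$, and integrate over $u_2$ (Fubini); then fix $u_1$, apply it to $z\mapsto\varphi(g'_v(u_1)-z)$ to replace $g_v(u_2)$ by $g'_v(u_2)$, and integrate over $u_1$. Chaining the two passes and integrating over $v$ gives the claim. (Incidentally, this one-variable-at-a-time argument proves the ``product'' comparison for any $\Phi$ that is separately convex in each argument, so no jointly-convex machinery or external lemma is needed.) With that two-pass argument inserted in place of the citation, your proof coincides with the paper's.
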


\begin{proof}
    By the Hardy-Littlewood-Polya theorem (see, e.g., \cite[Theorem 3.21]{Ru-2013} we have that \((Y|X)\leq_S (Y'|X')\) is equivalent to
    \begin{align*}
        \int_0^1 \phi(F_{Y|X=q_X(u)}(q_Y(v))) \de \lambda(u) \leq \int_0^1 \phi(F_{Y'|X'=q_{X'}(u)}(q_{Y'}(v))) \de \lambda(u) \quad 
    \end{align*}
    for all $v$ and for all convex $\phi$ such that the integrals exist. Since the functions \(z \mapsto \varphi( z-c )\) and \(z \mapsto \varphi( c-z )\) are convex whenever \(\varphi\) is convex, using Fubini's theorem and change of coordinates
    repeatedly it follows that 
    \begin{align*}
    &\int_{\R^2}\int_{\R} \varphi(F_{Y|X=x_1}(y)-F_{Y|X=x_2}(y)) \de \PP^Y(y) \de (\PP^{X}\otimes \PP^{X})(x_1,x_2)\\
    &= \int_{(0,1)}\int_{(0,1)} \int_{(0,1)} \varphi\left(F_{Y|X=q_X(u_1)}(q_Y(v)) - F_{Y|X=q_X(u_2)}(q_Y(v)) \right) \de \lambda(u_1) \de \lambda(v) \de \lambda(u_2)  \\
    &\leq \int_{(0,1)} \int_{(0,1)} \int_{(0,1)} \varphi\left(F_{Y'|X'=q_{X'}(u_1)}(q_{Y'}(v)) - F_{Y'|X'=q_{X'}(u_2)}(q_{Y'}(v)) \right) \de \lambda(v) 
    \de \lambda(u_1) \de \lambda(u_2) \\
        &= \int_{\R^2}\int_{\R} \varphi(F_{Y'|X'=x_1}(y)-F_{Y'|X'=x_2}(y)) \de \PP^{Y'}(y) \de (\PP^{X'}\otimes \PP^{X'})(x_1,x_2)
    \end{align*}
    which implies \(\Lambda_\varphi(Y|X)\leq \Lambda_\varphi(Y'|X')\,.\)
\end{proof}

\begin{remark}
\begin{enumerate}[(a)]
    \item Minimal and maximal elements with respect to the Schur order for conditional distributions characterize independence and complete dependence, respectively, see \cite[Theorem 3.5]{Ansari-Fuchs-2022}. Hence, as a consequence of Theorem \ref{thelambda3}, if \(Y\) is completely depend on \(X\) and if \(\varphi\) is convex, then \(\Lambda_\varphi(Y|X) = 1\,,\) using that \(\alpha_\varphi\in (0,\infty)\,.\) If additionally \(\varphi\) is strictly convex in \(0\) and if \(\varphi(0)=0\,,\) then Theorem \ref{thelambdaphi} provides the reverse implication, i.e., that \(\Lambda_\varphi(Y|X)= 1\) implies that \(Y=f(X)\) almost surely.
    \item Various well-known (sub-)families of bivariate distributions are stochastically increasing and increasing with respect to the lower orthant order (implying that
    the associated conditional distributions are increasing with respect to the Schur order for conditional distributions). Examples are the bivariate normal distribution with respect to the non-negative correlation parameter, several Archimedean copula families such as the Clayton, Gumbel or Frank family, or various extreme value copula families, see, e.g., \cite[Chapter 4]{Nelsen-2006}, \cite{Mueller-Scarsini-2005}, and \cite[Examples 3.18 and 3.19]{Ansari-2021}.
\end{enumerate}
    
\end{remark}

\newpage
\subsection{Continuous setting and $L^p$-version} \label{secLp}
From now on we will assume that $(X,Y)$ has continuous bivariate 
distribution function since continuity is necessary for having uniqueness in 
 Sklar's theorem (see \cite{Nelsen-2006}) and both for deriving 
continuity results of the functional \(\Lambda_\varphi\) and for the the estimation procedure studied in the next section. 
Furthermore, in the sequel (following \cite{JGT}) we will write conditional distributions as Markov kernels, i.e., $\PP^{Y|X=x}(\cdot)=K(x,\cdot)$, let $C$ denote the (unique) copula underlying 
$(X,Y)$ and $K_C(\cdot,\cdot)$ as the Markov kernel of $C$.
Notice that according to \cite[Lemma 1]{MFFT} (also see  \cite[Theorem 2.2]{Ansari-2021}), 
starting with a Markov kernel $K_C(\cdot,\cdot)$ and setting 
$$K(x,(-\infty,y]):=K_C(F_X(x),[0,F_Y(y)])$$
yields a (version of the) Markov kernel of $(X,Y)$. 
In the continuous setting, the functional $\Lambda_\varphi$ only depends on the 
underlying copula $C$ since for every continuous \(\varphi\colon [-1,1]\to \R\)
using change of coordinates we have 
\begin{align}\label{eq:trafo.to.C}
        \Lambda_{\varphi}(Y|X) 
        &= \int_{\R^2}\int_{\R} \varphi\big(F_{Y|X=x_1}(y)-F_{Y|X=x_2}(y)\big) \de \PP^{X}\otimes \PP^{X}(x_1,x_2) \de \PP^{Y}(y) \nonumber\\
        &= \int_{\R^2}\int_{\R} \varphi\big(K_C(F_X(x_1),[0,F_Y(y)])-K_C(F_X(x_2),[0,F_Y(x)])\big) \de \PP^{X}\otimes \PP^{X}(x_1,x_2) \de \PP^{Y}(y)\nonumber\\
        &= \int_0^1\int_0^1\int_0^1 \varphi\big(K_{C}(u_1,[0,v]) - K_C(u_2,[0,v]\big)
        \de \lambda(v)\de \lambda(u_1)\de \lambda(u_2).
\end{align}        

\noindent Following \cite{KFT} we say that a sequence a sequence \((C_n)_{n\in \N}\) of bivariate copulas \emph{converges weakly conditional} to a bivariate copula \(C\) and 
write \(C_n\xrightarrow{wcc} C\,,\) 
if there exists some Borel set $\Gamma \subseteq [0,1]$
with $\lambda(\Gamma)=1$ such that the probability measures
$(K_{C_n}(u,\cdot))_{n \in \mathbb{N}}$ converges weakly to the probability measure $K_C(u,\cdot)$ for every \(x\in \Gamma\). 
The following result provides sufficient conditions for continuity of the functional \(\Lambda_\varphi\) and is straightforward to prove using eq. \eqref{eq:trafo.to.C} 
and dominated convergence. 
\begin{proposition}[Continuity]
    Let \((X,Y),(X_1,Y_1),(X_2,Y_2),\ldots\) be bivariate random vectors with continuous distribution functions and corresponding copulas $A,A_1,A_2,\ldots$ and 
    let $\varphi\colon [-1,1] \to \R$ be continuous. Then
    $A_n\xrightarrow{wcc} A$ implies convergence of $(\Lambda_\varphi(Y_n|X_n))_{n \in \mathbb{N}}$ to $\Lambda_\varphi(Y|X)$.
   \end{proposition}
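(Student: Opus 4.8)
The plan is to reduce everything to the copula representation \eqref{eq:trafo.to.C} and then apply dominated convergence to the triple integral. First I would observe that since $A,A_1,A_2,\ldots$ are the copulas underlying the (continuous) random vectors, formula \eqref{eq:trafo.to.C} gives
\[
\Lambda_\varphi(Y_n|X_n) = \int_0^1\int_0^1\int_0^1 \varphi\bigl(K_{A_n}(u_1,[0,v]) - K_{A_n}(u_2,[0,v])\bigr)\,\de\lambda(v)\,\de\lambda(u_1)\,\de\lambda(u_2),
\]
and likewise for $A$ with $K_A$. So the whole statement is a convergence statement about these integrals with respect to $\lambda^{\otimes 3}$ on $[0,1]^3$.

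Next I would unpack the definition of $A_n\xrightarrow{wcc} A$: there is a Borel set $\Gamma\subseteq[0,1]$ with $\lambda(\Gamma)=1$ such that $K_{A_n}(u,\cdot)\to K_A(u,\cdot)$ weakly for every $u\in\Gamma$. Weak convergence of probability measures on $[0,1]$ implies convergence of distribution functions at every continuity point of the limit; moreover, for a copula the marginal conditional distribution $v\mapsto K_A(u,[0,v])$ is, for $\lambda$-almost every $u$, a genuine (nondecreasing, right-continuous) distribution function on $[0,1]$, hence has at most countably many discontinuities. Therefore, for $(u_1,u_2,v)$ in a set of full $\lambda^{\otimes3}$-measure — namely $u_1,u_2\in\Gamma$ together with the further null-set exclusion of the (at most countably many, hence $\lambda$-null in $v$) discontinuity points of $K_A(u_1,\cdot)$ and $K_A(u_2,\cdot)$ — we get
\[
K_{A_n}(u_1,[0,v]) - K_{A_n}(u_2,[0,v]) \;\longrightarrow\; K_A(u_1,[0,v]) - K_A(u_2,[0,v]).
\]
Since $\varphi\colon[-1,1]\to\R$ is continuous, composing with $\varphi$ preserves this pointwise ($\lambda^{\otimes3}$-a.e.) convergence of the integrands.

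Finally I would invoke dominated convergence: every integrand takes the form $\varphi(a-b)$ with $a,b\in[0,1]$, so the argument of $\varphi$ lies in the compact interval $[-1,1]$, on which the continuous function $\varphi$ is bounded by some constant $M:=\sup_{t\in[-1,1]}|\varphi(t)|<\infty$. The constant $M$ is integrable on $[0,1]^3$ with respect to $\lambda^{\otimes3}$, so dominated convergence applies and yields $\Lambda_\varphi(Y_n|X_n)\to\Lambda_\varphi(Y|X)$. The only genuinely delicate point — and the one worth spelling out — is the justification that the exceptional set in $(u_1,u_2,v)$ is $\lambda^{\otimes3}$-null: this needs that for $\lambda$-a.e.\ fixed $u$ the map $v\mapsto K_A(u,[0,v])$ is a bona fide distribution function (so Fubini/Tonelli lets one assemble the countable-in-$v$ discontinuity sets into a $\lambda^{\otimes3}$-null set), which is exactly the content of the kernel representation recalled before the proposition (cf.\ \cite[Lemma 1]{MFFT}, \cite[Theorem 2.2]{Ansari-2021}). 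Everything else is routine.
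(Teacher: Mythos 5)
Your proof is correct and follows exactly the route the paper indicates (and leaves as "straightforward"): reduce to the copula representation \eqref{eq:trafo.to.C}, use weak conditional convergence to get $\lambda^{\otimes 3}$-a.e.\ pointwise convergence of the integrands after excluding the countably many discontinuity points of $v\mapsto K_A(u_i,[0,v])$, and conclude by dominated convergence with the bound $\sup_{t\in[-1,1]}|\varphi(t)|$. Your explicit treatment of the null-set assembly via Fubini is a welcome addition to what the paper merely asserts.
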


\begin{example}\label{ex:mono}
A particular class of dependence measures fulfilling all of the afore-mentioned properties
is obtained when considering 
\(\varphi(x)=|x|^p\) for some \(p\geq 1\). In this case \(\Lambda_\varphi\) corresponds to the \(L^p\)-distance of conditional distribution functions. In fact, in this case 
using eq. \eqref{eq:trafo.to.C} we have 
\begin{align}\label{eq:Lamda.delta.p}
\Lambda_p(Y|X) &:=  3 \int_{[0,1]^3} \vert K_A(u_1,[0,z]) -  K_A(u_2,[0,z]) \vert^p \, \de \lambda^3(u_1,u_2,z) = \Lambda_p(A)\,,
\end{align}
where the normalizing constant according is given by 
\begin{align}
    \alpha_p:=\alpha_\varphi= \int_{(0,1)^3} |\1_{\{u_1\leq v\}} - \1_{\{u_2\leq v\}}|^p \de \lambda^3(u_1,u_2,v).
\end{align}
Since the normalizing constant does not depend on \(p\) it follows that the mapping
$p \mapsto \Lambda_p(Y|X)$  is decreasing in $p \in [1,\infty)$. \\
For \(p=2\) the functional \(\Lambda_p\) compares two conditional distributions as 
in \eqref{eq:Lamda.allg} and coincides with Chatterjee's correlation coefficient 
(see Example \ref{Ex:Chatterjee}) which compares conditional and unconditional distributions as in \eqref{eq:cond-uncond}. 
For \(p=1\,,\) however, the following examples illustrates that the comparison of two conditional distributions as in \eqref{eq:Lamda.allg} only coincides for some specific families with the \(L^1\)-analogue of Chatterjee's coefficient of correlation.
\end{example}
\begin{example}[FGM and Fr{\'e}chet copulas]\label{Ex.FGM.Frechet}~~
The functional \(\zeta_1(Y|X)\) defined by
\begin{align}
    \zeta_1(Y|X) := \frac{1}{3}\int_0^1 | F_{Y|X=x}(y)-F_Y(y)| \de \PP^Y(y) \de \PP^X(x)
\end{align}
is a dependence measure introduced and studied in \cite{wt2011}.
\begin{enumerate}[(a)]
\item For $\alpha \in [0,1]$ the copula 
\begin{align*}
  C_{\alpha} (u,v)
  := uv + \alpha \, u(1-u)v(1-v)
\end{align*}
is a member of the Farlie-Gumbel-Morgenstern (FGM) family. 
Let \((X,Y)\) be a random vector with continuous distribution function and underlying 
copula \(C_{\alpha}\). Then straightforward calculations yield
\begin{align*}
    \Lambda_1(Y|X) 
    = \frac{\alpha}{3} 
    \qquad \textrm{ and } \qquad
    \zeta_1(Y|X) 
    = \frac{\alpha}{4}, 
\end{align*}
so \(\Lambda_1(Y|X) = \tfrac{4}{3} \, \zeta_1(Y|X) \neq \zeta_1(Y|X)\).

\item For $\alpha \in [0,1]$ the copula 
\begin{align*}
  C_{\alpha} (u,v)
  := \alpha \, M(u,v) + (1-\alpha) \, \Pi(u,v)
\end{align*}
is a member of the Fr{\'e}chet copula family. In this case 
for continuous \((X,Y)\) with underlying copula \(C_{\alpha}\) we get 
\begin{align*}
    \Lambda_1(Y|X) 
    = \alpha 
    = \zeta_1(Y|X) \,.
\end{align*}
\end{enumerate}
\end{example}

\section{Estimating the functionals}\label{sec_estimation}
In this section, we propose a copula-based so-called checkerboard estimator 
as plug-in estimator for the functionals \(\Lambda_\psi\) in \eqref{eq:Lamda.delta2.phi} and \(\Lambda_\varphi\) in \eqref{eq:Lamda.delta.phi} and start with recalling some basic facts.\\
For every bivariate copula $A$ and $N \in \N$ the $N$-checkerboard 
approximation of $A$ (see \cite{LMT,JGT}) will be denoted by $\Ch_N(A)$.
As a consequence of \cite[Corollary 3.2]{LMT}, 
 for every copula $A$ the sequence $(\Ch_N(A))_{N \in \mathbb{N}}$ converges
weakly conditional to $A$. 
To assure well-definedness we will refer to the bilinear interpolation of the 
subcopula resulting from Sklar's theorem by considering the empirical distribution functions as the empirical copula, see, e.g., \cite{Genest-2017,fgwt2021}.\\
Plugging in the empirical copula we get the following consistency result.

\begin{lemma}\label{thm:checkerboard.wcc}
Let $(X_1, Y_1), (X_2, Y_2),\ldots$  be a random sample from $(X,Y)$ and assume that $(X,Y)$ has continuous 
distribution function $H$, underlying copula $A$, and that $E_n$ denotes the empirical copula 
(of the first $n$ observations). Then setting $N(n) :=\lfloor{n}^s\rfloor$ for some 
$s \in (0,\frac{1}{2})$ the sequence $(\Ch_{N(n)}(E_n))_{n \in \mathbb{N}}$ converges weakly conditional to $A$
with probability $1$.
\end{lemma}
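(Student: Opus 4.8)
The plan is to combine the (deterministic) convergence $\Ch_{N(n)}(A)\xrightarrow{wcc}A$ — which follows from \cite[Corollary 3.2]{LMT} since $N(n)=\lfloor n^s\rfloor\to\infty$ — with a quantitative almost-sure estimate showing that $\Ch_{N(n)}(E_n)$ and $\Ch_{N(n)}(A)$ have uniformly (in both variables) close conditional distribution functions, and then to conclude via the triangle inequality. Recall that $C_n\xrightarrow{wcc}C$ means there is a Borel set $\Gamma\subseteq[0,1]$ with $\lambda(\Gamma)=1$ such that $K_{C_n}(u,\cdot)$ converges weakly to $K_C(u,\cdot)$ for every $u\in\Gamma$; equivalently, $K_{C_n}(u,[0,v])\to K_C(u,[0,v])$ at every $v$ where $v\mapsto K_C(u,[0,v])$ is continuous.

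First I would record an almost-sure uniform rate for the empirical copula. Passing through the empirical copula of the (unobservable) rank-transformed sample and invoking the law of the iterated logarithm for the bivariate empirical process, together with the elementary bilinear-interpolation estimates and the $O(1/n)$-correction coming from working with ranks (see, e.g., \cite{fgwt2021,Genest-2017}), one obtains
\[
 \|E_n-A\|_\infty = O\!\Big(\sqrt{\tfrac{\log\log n}{n}}\Big)\qquad\text{almost surely.}
\]

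Second I would compute the checkerboard kernel explicitly. For $u$ lying in the $i$-th column $\big(\tfrac{i-1}{N},\tfrac iN\big]$ and $j\in\{1,\dots,N\}$ one has, for any copula $B$,
\[
 K_{\Ch_N(B)}\big(u,[0,\tfrac jN]\big)=N\,\Big(B\big(\tfrac iN,\tfrac jN\big)-B\big(\tfrac{i-1}{N},\tfrac jN\big)\Big),
\]
and $v\mapsto K_{\Ch_N(B)}(u,[0,v])$ is the piecewise-linear interpolation of these grid values. Applying this with $B=E_n$ and with $B=A$ and using piecewise linearity therefore gives
\[
 \sup_{u,v\in[0,1]}\big|K_{\Ch_{N(n)}(E_n)}(u,[0,v])-K_{\Ch_{N(n)}(A)}(u,[0,v])\big|\le 2\,N(n)\,\|E_n-A\|_\infty = O\!\Big(n^{s-1/2}\sqrt{\log\log n}\Big),
\]
which tends to $0$ almost surely exactly because $s<\tfrac12$. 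Balancing the factor $N(n)$ — by which the checkerboard operator amplifies sup-norm errors at the kernel level — against the $n^{-1/2}$-decay of $\|E_n-A\|_\infty$ is the heart of the argument and the step I expect to require the most care, together with pinning down the almost-sure rate in the first step and verifying the explicit kernel identity in the second.

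Finally I would fix a sample realization in the almost-sure event on which the last display holds, let $\Gamma$ be the deterministic full-measure set witnessing $\Ch_{N(n)}(A)\xrightarrow{wcc}A$, and, for $u\in\Gamma$ and every $v$ at which $v\mapsto K_A(u,[0,v])$ is continuous, estimate
\[
 \big|K_{\Ch_{N(n)}(E_n)}(u,[0,v])-K_A(u,[0,v])\big|\le \big|K_{\Ch_{N(n)}(E_n)}(u,[0,v])-K_{\Ch_{N(n)}(A)}(u,[0,v])\big|+\big|K_{\Ch_{N(n)}(A)}(u,[0,v])-K_A(u,[0,v])\big|;
\]
the first summand vanishes as $n\to\infty$ by the previous step and the second by \cite[Corollary 3.2]{LMT}. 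Hence $K_{\Ch_{N(n)}(E_n)}(u,\cdot)$ converges weakly to $K_A(u,\cdot)$ for every $u\in\Gamma$, i.e.\ $\Ch_{N(n)}(E_n)\xrightarrow{wcc}A$ with probability one.
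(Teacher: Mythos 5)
Your proposal is correct and follows essentially the same route as the paper's proof: the same triangle-inequality decomposition through $\Ch_{N(n)}(A)$, the same explicit checkerboard kernel identity, and the same balancing of the factor $N(n)=\lfloor n^s\rfloor$ against the almost-sure iterated-logarithm rate $d_\infty(E_n,A)=O\big(\sqrt{\log\log n/n}\big)$, with the deterministic part supplied by \cite[Corollary 3.2]{LMT}. The only cosmetic difference is that you state the kernel comparison uniformly in both variables via piecewise linearity, while the paper argues pointwise in $u$ and on a dense set of $v$; both yield the claimed weak conditional convergence.
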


\begin{proof}
Choose $\tilde{\Gamma} \in \mathcal{B}([0,1])$ with $\lambda(\tilde{\Gamma})=1$ in such a way that for every 
$x \in \tilde{\Gamma}$ we have
that $(K_{\Ch_N(A)}(x,\cdot))_{N \in \mathbb{N}}$ converges weakly to $K_A(x,\cdot)$. 
Setting $\Gamma := \tilde{\Gamma} \setminus \bigcup_{N=2}^\infty\{0,\frac{1}{N},\frac{2}{N},\ldots,\frac{N-1}{N},1\}$ obviously yields
$\lambda(\Gamma)=1$. \\
Fix $x \in \Gamma$ and consider some $y \in (0,1)$ with
\begin{equation}\label{eq:temp2}
\lim_{N \rightarrow \infty} K_{\Ch_N(A)}(x,[0,y]) = K_A(x,[0,y]).
\end{equation}
Plugging in the empirical copula, setting $I_n:=\vert K_{\Ch_{N(n)}(E_n)}(x,[0,y]) - K_A(x,[0,y]) \vert $,
 and using the triangle inequality yields  
\begin{eqnarray*}
I_n &\leq& 
\vert K_{\Ch_{N(n)}(E_n)}(x,[0,y]) - K_{\Ch_{N(n)}(A)}(x,[0,y]) \vert + \vert K_{\Ch_{N(n)}(A)}(x,[0,y]) - K_{A}(x,[0,y]) \vert
\end{eqnarray*}
The second summand converges to $0$ for $n \rightarrow \infty$ due to \eqref{eq:temp2}. In order to show that the first summand converges to \(0\) almost surely, denote by 
\(i_N(x)\,,\) \(N\geq 2\,,\) the unique integer in \(\{1,\ldots,N\}\) fulfilling 
$x \in (\frac{i_N(x)-1}{N},\frac{i_N(x)}{N})$. \\
Since $\Ch_{N(n)}(E_n)$ and
$\Ch_{N(n)}(A)$ are checkerboard copulas, we have
\begin{eqnarray*}
K_{\Ch_{N(n)}(E_n)}(x,[0,y]) &=& N(n) \left(E_n\left(\frac{i_{N(n)}(x)}{N(n)},y\right) 
-  E_n\left(\frac{i_{N(n)}(x)-1}{N(n)},y\right)\right) \\ 
K_{\Ch_{N(n)}(A)}(x,[0,y]) &=& N(n) \left(A\left(\frac{i_{N(n)}(x)}{N(n)},y\right) 
-  A\left(\frac{i_{N(n)}(x)-1}{N(n)},y\right)\right).
\end{eqnarray*}
This, however, implies
\begin{eqnarray*}
\limsup_{n \rightarrow \infty}\vert K_{\Ch_{N(n)}(E_n)}(x,[0,y]) - K_{\Ch_{N(n)}(A)}(x,[0,y]) \vert \leq 2 
\limsup_{n \rightarrow \infty} N(n) \, d_\infty(E_n,A)=0\,,
\end{eqnarray*}
\(\PP\)-almost surely, whereby the last identity is a consequence of the definition of 
\(N(n)\) and the fact that according to \cite[Lemma 1]{JSV} the empirical copula $E_n$ satisfies
\begin{equation}
d_\infty(E_n,A)= O\left(\sqrt{\frac{\log(\log(n))}{n}} \right)
\end{equation}
with probability $1$. Altogether it follows that $\lim_{n \rightarrow \infty} I_n=0$
almost surely. \\
Since all but at most countably many $y \in (0,1)$ fulfill  (\ref{eq:temp2}) 
and weak convergence is equivalent to convergence on a dense set, we have shown 
weak convergence of $(K_{CB_{N(n)}(E_n)}(x,\cdot))_{n \in \mathbb{N}}$ to 
$(K_{A}(x,\cdot))_{n \in \mathbb{N}}$ for 
every $x \in \Gamma$, which completes the proof.
\end{proof}

Having this, the following main result of this section is straightforward to prove.
\begin{theorem}[Strongly consistent estimator for \(\Lambda_\varphi(A)\)]\label{theestphi}~\\
Let $(X_1, Y_1), (X_2, Y_2),\ldots$  be a random sample from $(X,Y)$, assume 
that $(X,Y)$ has continuous distribution function, and let \(A\) denote 
the copula of \((X,Y)\). Furthermore choose $s$ and the resolution $N(n)$ as in Lemma \ref{thm:checkerboard.wcc}.
If \(\varphi\) is continuous, then $\Lambda_{\varphi}(\Ch_{N(n)}(E_n))$ is a strongly consistent estimator for $\Lambda_{\varphi}(A)$, i.e., 
with probability $1$, we have
$$
\lim_{n \rightarrow \infty} \Lambda_{\varphi}(\Ch_{N(n)}(E_n))=\Lambda_{\varphi}(A).
$$ 
\end{theorem}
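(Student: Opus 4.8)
The plan is to read the statement as the combination of two facts already in hand: Lemma \ref{thm:checkerboard.wcc}, which guarantees $\Ch_{N(n)}(E_n) \xrightarrow{wcc} A$ with probability $1$, and the copula representation \eqref{eq:trafo.to.C}, which expresses $\Lambda_\varphi$ as an integral that is continuous with respect to weak conditional convergence. Concretely, for a bivariate copula $B$ one has
\[
\Lambda_\varphi(B)=\alpha_\varphi^{-1}\int_{[0,1]^3}\varphi\bigl(K_B(u_1,[0,v])-K_B(u_2,[0,v])\bigr)\,\de\lambda^3(u_1,u_2,v),
\]
and the normalising constant $\alpha_\varphi=\int_{(0,1)^3}\varphi(\1_{\{u_1\le v\}}-\1_{\{u_2\le v\}})\,\de\lambda^3$ does \emph{not} depend on the copula; since $\varphi$ is continuous on the compact set $[-1,1]$ it is bounded, so $\alpha_\varphi$ is finite (and it is assumed positive so that $\Lambda_\varphi$ is well defined). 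Hence only the numerator has to be handled.

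The argument I would then spell out is the following. First I would fix an outcome $\omega$ in the probability-one event on which Lemma \ref{thm:checkerboard.wcc} holds; by definition of $\xrightarrow{wcc}$ there is a Borel set $\Gamma\subseteq[0,1]$ with $\lambda(\Gamma)=1$ such that $K_{\Ch_{N(n)}(E_n)}(u,\cdot)$ converges weakly to $K_A(u,\cdot)$ for every $u\in\Gamma$. Weak convergence of these probability measures on $[0,1]$ forces $K_{\Ch_{N(n)}(E_n)}(u,[0,v])\to K_A(u,[0,v])$ at every continuity point $v$ of the distribution function $v\mapsto K_A(u,[0,v])$, hence for $\lambda$-almost every $v$ once $u\in\Gamma$ is fixed. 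Packaging the exceptional sets together via Fubini, the map $(u_1,u_2,v)\mapsto\varphi\bigl(K_{\Ch_{N(n)}(E_n)}(u_1,[0,v])-K_{\Ch_{N(n)}(E_n)}(u_2,[0,v])\bigr)$ converges $\lambda^3$-almost everywhere to $\varphi\bigl(K_A(u_1,[0,v])-K_A(u_2,[0,v])\bigr)$, using continuity of $\varphi$.

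Finally I would invoke dominated convergence: the integrands are uniformly bounded in absolute value by $\sup_{[-1,1]}|\varphi|<\infty$, so the triple integrals converge, and dividing by the fixed constant $\alpha_\varphi$ yields $\Lambda_\varphi(\Ch_{N(n)}(E_n))\to\Lambda_\varphi(A)$ on the chosen probability-one event, which is the claim. (Equivalently, Lemma \ref{thm:checkerboard.wcc} provides exactly the hypothesis of the Continuity Proposition with $A_n:=\Ch_{N(n)}(E_n)$, and the conclusion is immediate.) I do not anticipate a real obstacle; the only steps needing a little care are the passage from weak convergence of the Markov kernels to almost-everywhere pointwise convergence of the conditional distribution functions, and the Fubini bookkeeping that turns a family of $\lambda$-null sets (one per $u\in\Gamma$) into a single $\lambda^3$-null set in the domain of the triple integral — all the genuinely technical work having already been carried out in Lemma \ref{thm:checkerboard.wcc} and in the proof of the Continuity Proposition.
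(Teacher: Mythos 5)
Your proposal is correct and follows essentially the same route as the paper: invoke Lemma \ref{thm:checkerboard.wcc} for almost-sure weak conditional convergence of $\Ch_{N(n)}(E_n)$ to $A$, deduce $\lambda^3$-almost-everywhere convergence of the integrand in the copula representation \eqref{eq:trafo.to.C}, and conclude by dominated convergence with the bound $\sup_{[-1,1]}|\varphi|<\infty$. If anything, you are more careful than the paper's own (rather terse) proof about the passage from weak convergence of the kernels to pointwise convergence of $v\mapsto K(u,[0,v])$ at continuity points and the Fubini bookkeeping of the null sets, and about working with general continuous $\varphi$ rather than the $L^p$ special case appearing in the paper's displayed formulas.
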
 
\begin{proof}
According to (the proof of) Lemma \ref{thm:checkerboard.wcc} we have that 
$(K_{CB_{N(n)}}(u,\cdot))_{n \in \mathbb{N}}$
converges weakly to $K_A(u,\cdot)$ for every $u \in \Gamma$. 
As a direct consequence, for $u_1,u_2 \in \Gamma$ it follows that
\begin{eqnarray*}
\lim_{n \rightarrow \infty} \vert K_{CB_{N(n)(E_n)}}(u_1,[0,y])) - K_{CB_{N(n)(E_n)}}(u_2,[0,y])) \vert
&=&\vert K_{A}(u_1,[0,y])) - K_{A}(u_2,[0,y])) \vert,
\end{eqnarray*}
hence, considering $\lambda_2(\Gamma \times \Gamma)=1$, using eq. (\ref{eq:Lamda.delta.p}), and applying dominated convergence we have
\begin{eqnarray*}
 &&\lim_{n \rightarrow \infty}  \int_{[0,1]^2} \int_{[0,1]} \vert K_{\Ch_{N(n)(E_n)}}(u_1,[0,y])) - 
   K_{\Ch_{N(n)(E_n)}}(u_2,[0,y])) \vert ^p d\lambda(y)\, d\lambda_2(u_1,u_2) \\
   &&= \int_{[0,1]^2} \int_{[0,1]} \vert K_{A}(u_1,[0,y])) - 
   K_{A}(u_2,[0,y])) \vert ^p d\lambda(y)\, d\lambda_2(u_1,u_2)= \Lambda_{p}(A).
\end{eqnarray*}
\end{proof}
\begin{remark}
    Notice that according to Theorem \ref{theestphi} strong consistency holds in full 
    generality, i.e., without any smoothness restrictions for the copula $A$. 
    We also conjecture that the estimator $\Lambda_{\varphi}(\Ch_{N(n)}(E_n))$ 
    is asymptotically normal, a proof, however, seems still out of reach, 
    see, e.g., \cite{strothmann2022}.
\end{remark}

\begin{remark}
    The checkerboard estimator can also be used for estimating \(\Lambda_\Delta(Y|X)\) in the more general case where the distance function \(\Delta\) is at least component-wise continuous with respect to weak convergence and invariant with respect to strictly increasing transformations of \(Y\).
\end{remark}

\clearpage
\section{Simulation Study and Real Data Example}\label{sec:sim}
In this section we first study the performance of the checkerboard estimator \(\Lambda_{\varphi}(\Ch_{N(n)}(E_n))\) from Theorem \ref{theestphi} 
and then quickly discuss a real data example.
\subsection{Simulation study}
We consider the continuous setting as studied in the previous section focus on the  following choices for the convex function \(\varphi\) - notice that that $\varphi$
is convex and strictly convex in $0=\varphi(0)$, so $\Lambda_\varphi$
is a dependence measure:
\begin{enumerate}[(i)]
    \item \label{phityp1} \(\varphi(x) = |x|^p\) for \(p\in \{1,2,3\}\,,\)
    \item \label{phityp2} \(\varphi(x) = e^{cx}-1\) for \(c\in \{1/5, 1, 5\}\,,\)
    \item \label{phityp3} \(\varphi(x) = e^{|cx|}-1\) for \(c\in \{1/5,1,5\}\,.\)
\end{enumerate}
As dependence structures we chose copulas \(A\) from the Marshall-Olkin (MO) family, defined by
\begin{equation*}
    M_{\alpha, \beta}(u, v) := \begin{cases}
        u^{1 - \alpha} v\,, &\text{if}\quad u^{\alpha} \geq v^{\beta}\,,\\
        uv^{1 - \beta}\,, &\text{if}\quad u^{\alpha} < v^{\beta}\,,
    \end{cases}
\end{equation*}
with \(\alpha,\beta\in [0,1]\) and consider  
\begin{equation*}
    (\alpha, \beta) \in \{(1, 0), (1, 1), (0.2, 0.7), (0.3, 1)\}.
\end{equation*}
Figures \ref{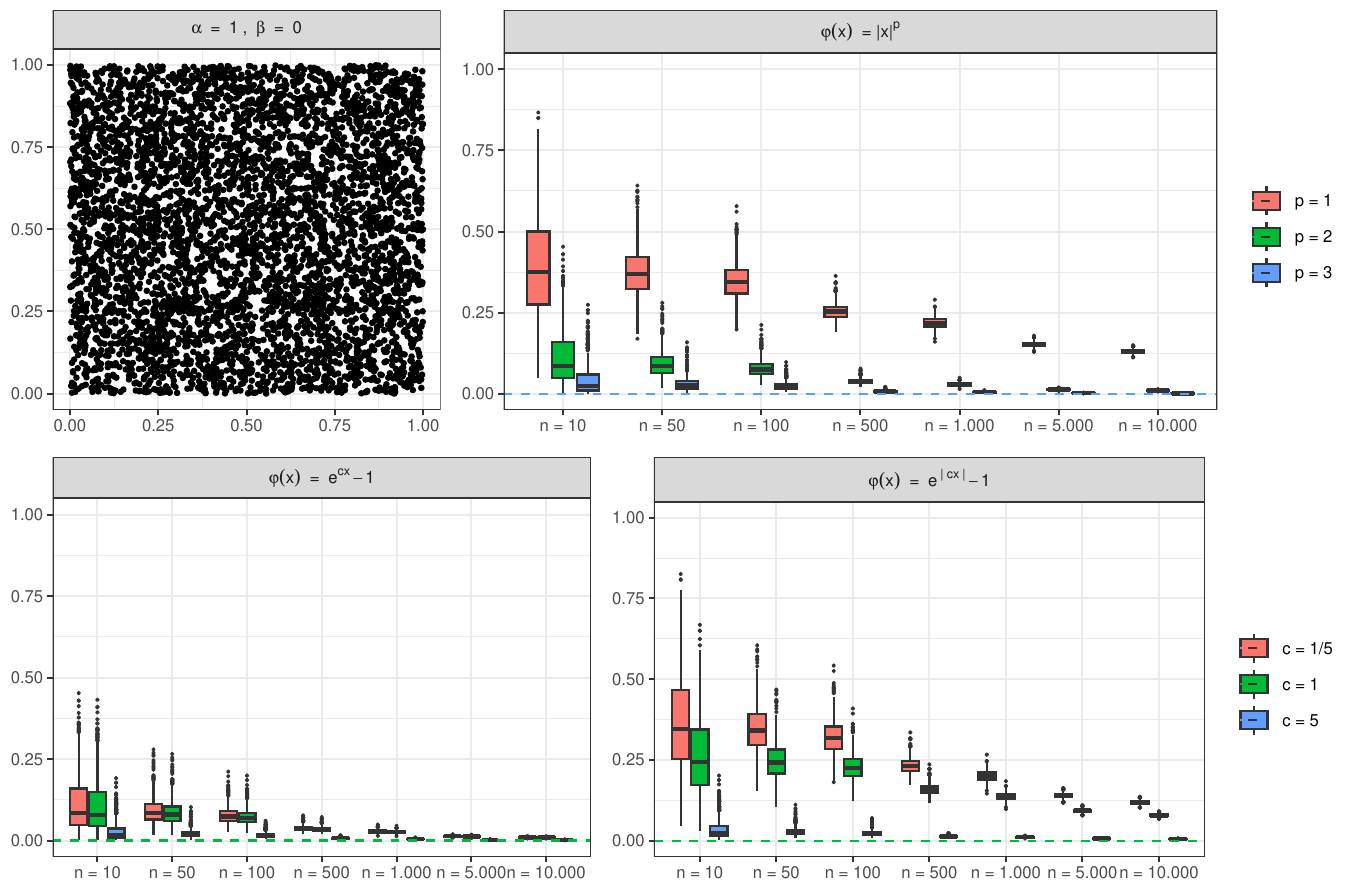} to \ref{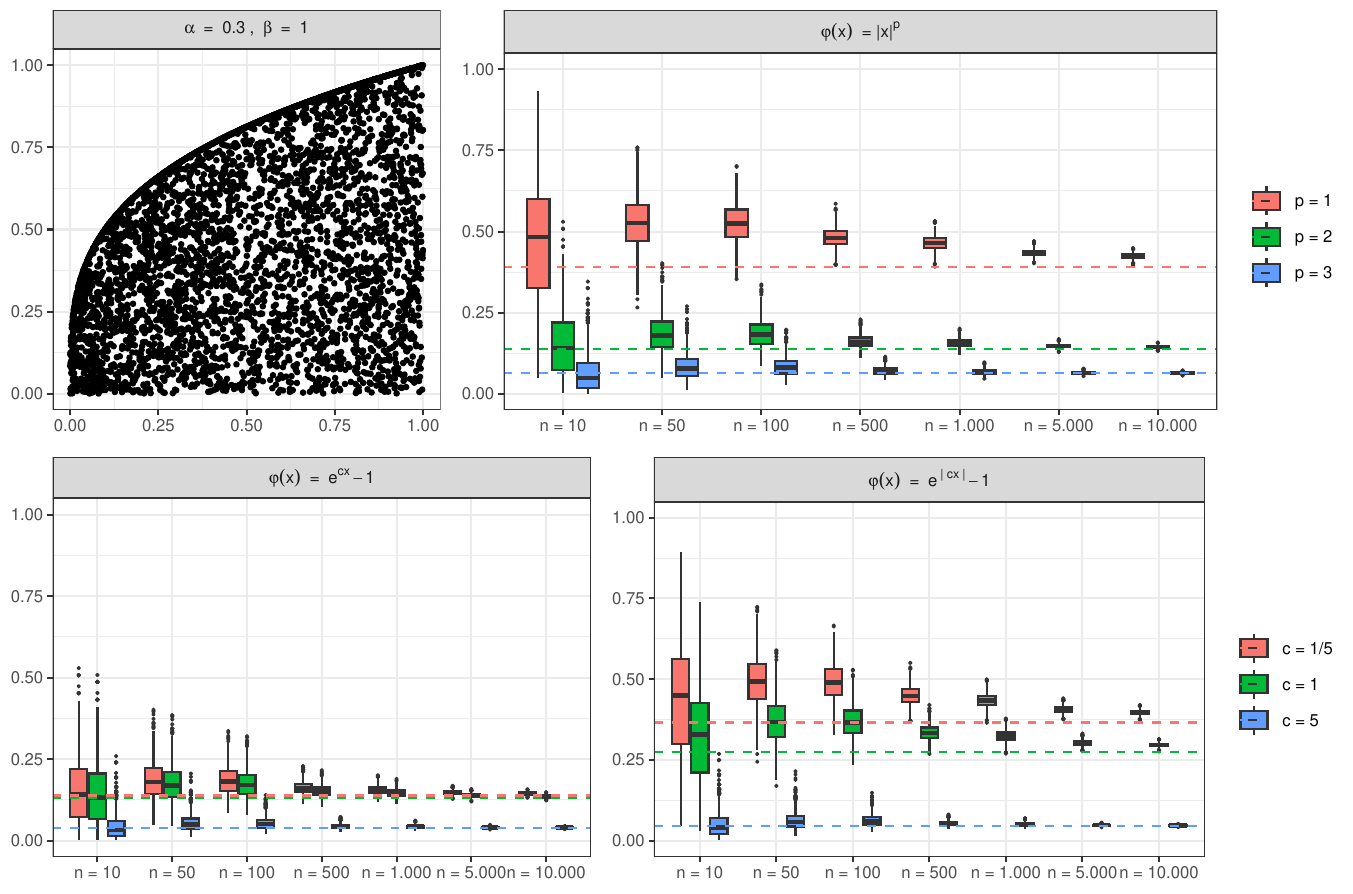} illustrate the behavior of the estimator 
\(\Lambda_{\varphi}(\Ch_{N(n)}(E_n))\) from Theorem \ref{theestphi} 
for each of the chosen copulas \(A\) and sample sizes given by
\begin{equation*}
    n \in \{10, 50, 100, 500, 1.000, 5.000, 10.000\},
\end{equation*}
where each scenario was simulated \(1.000\) times.
Estimation of the empirical checkerboard copula was performed using the command \textit{ecbc} from the R package \textit{qad} with the default of $s = 1/2$ (see \cite[Remark 3.14]{JGT} for a detailed discussion of this choice).
Figures \ref{MO_1_0.pdf} to \ref{MO_0.3_1.pdf} can be interpreted as follows:  

\begin{itemize}
    \item the measures with convex function \(\varphi\) of type \eqref{phityp2} perform best in all cases in terms of speed of convergence. 
    One possible explanation of this observation is asymmetry of 
    \(\varphi(x) = e^{cx}-1\,:\) The number of distinct values over which the integral runs is potentially twice as large and might therefore speed up convergence. 
    As a drawback, however, for these functions, \(\Lambda_\varphi(Y|X)\) often attains small values, see Figures \ref{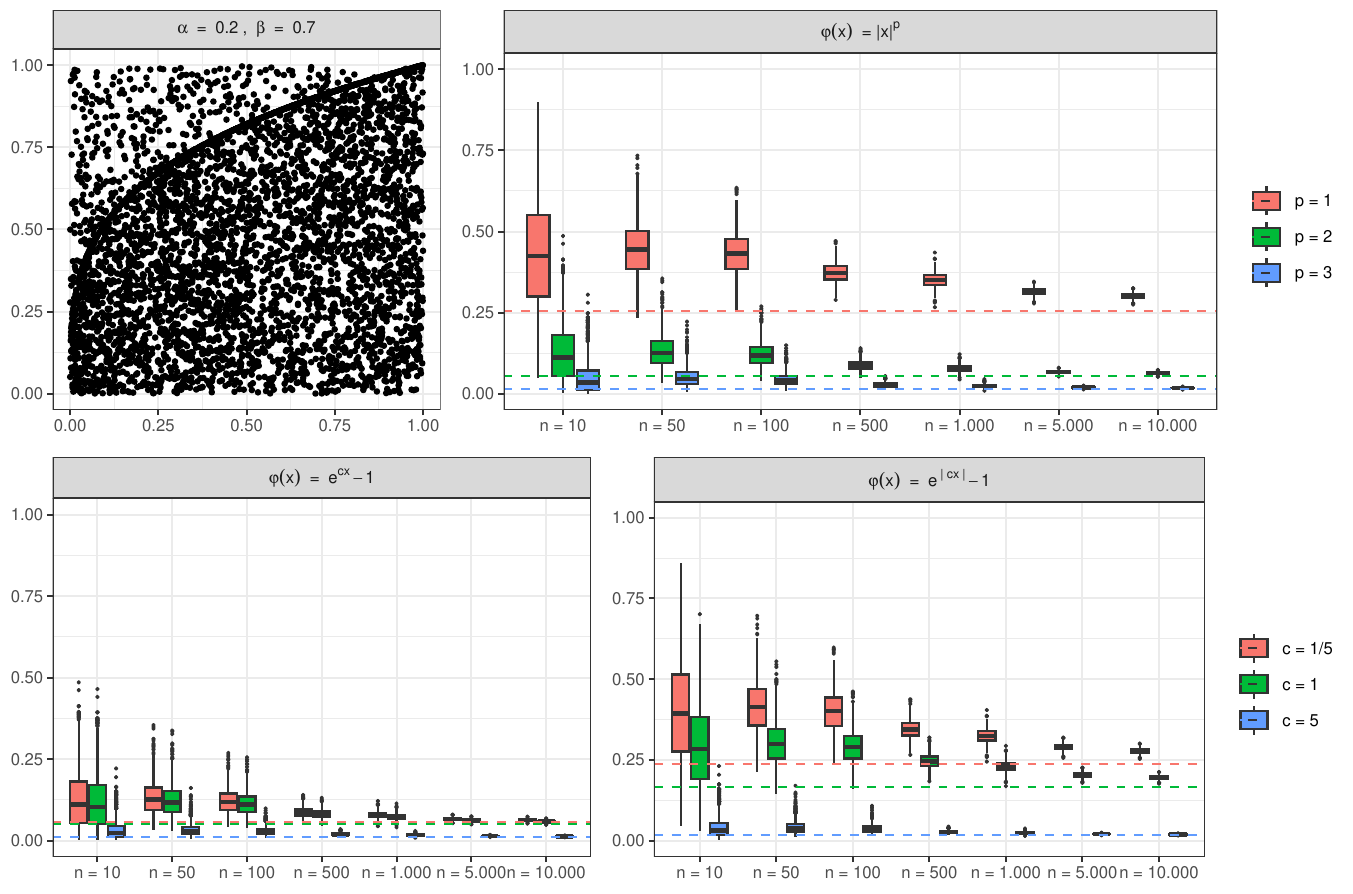} and \ref{MO_0.3_1.pdf}, which,
    however should not be confused with independence as considered in Figure \ref{MO_1_0.pdf}.
    \item \(\Lambda_\varphi(Y|X)\) is decreasing in the parameters \(p\) and \(c\) (compare with Example \ref{ex:mono}). Consequently, in the case of complete dependence, the measures with parameter \(p=1\) and \(c=1/5\) perform best because  deviations are less penalized by the shape of the function \(\varphi\) than 
    by the other cases. In contrast to that, if the parameters \(p\) and \(c\) are large, then the measure \(\Lambda_\varphi\) is more sensitive for detecting independence.
    \item the \(L^2\)-version \(\varphi(x)=x^2\) as well as the asymmetric version \(\varphi(x)=e^x-1\) perform best over all considered cases.
\end{itemize}
\FloatBarrier
\begin{figure}
    \centering
    \includegraphics[width = 0.78\textwidth]{MO_1_0.pdf}
    \caption{Simulation Results for the MO copula with parameters $\alpha = 1$ and $\beta = 0$, i.e., for the independence copula. The top left panel depicts a 
    random sample of size $n=5.000$ from this distribution. 
    The other panels contain boxplots of the estimations \(\Lambda_{\varphi}(\Ch_{N(n)}(E_n))\) according to Theorem \ref{theestphi} based on $1,000$ runs for each sample size \(n\) and for each convex function $\varphi$. The dashed lines 
    represent the true values of $\Lambda_\varphi(A)$.}
    \label{MO_1_0.pdf}
\end{figure}

\begin{figure}
    \centering
    \includegraphics[width = 0.78\textwidth]{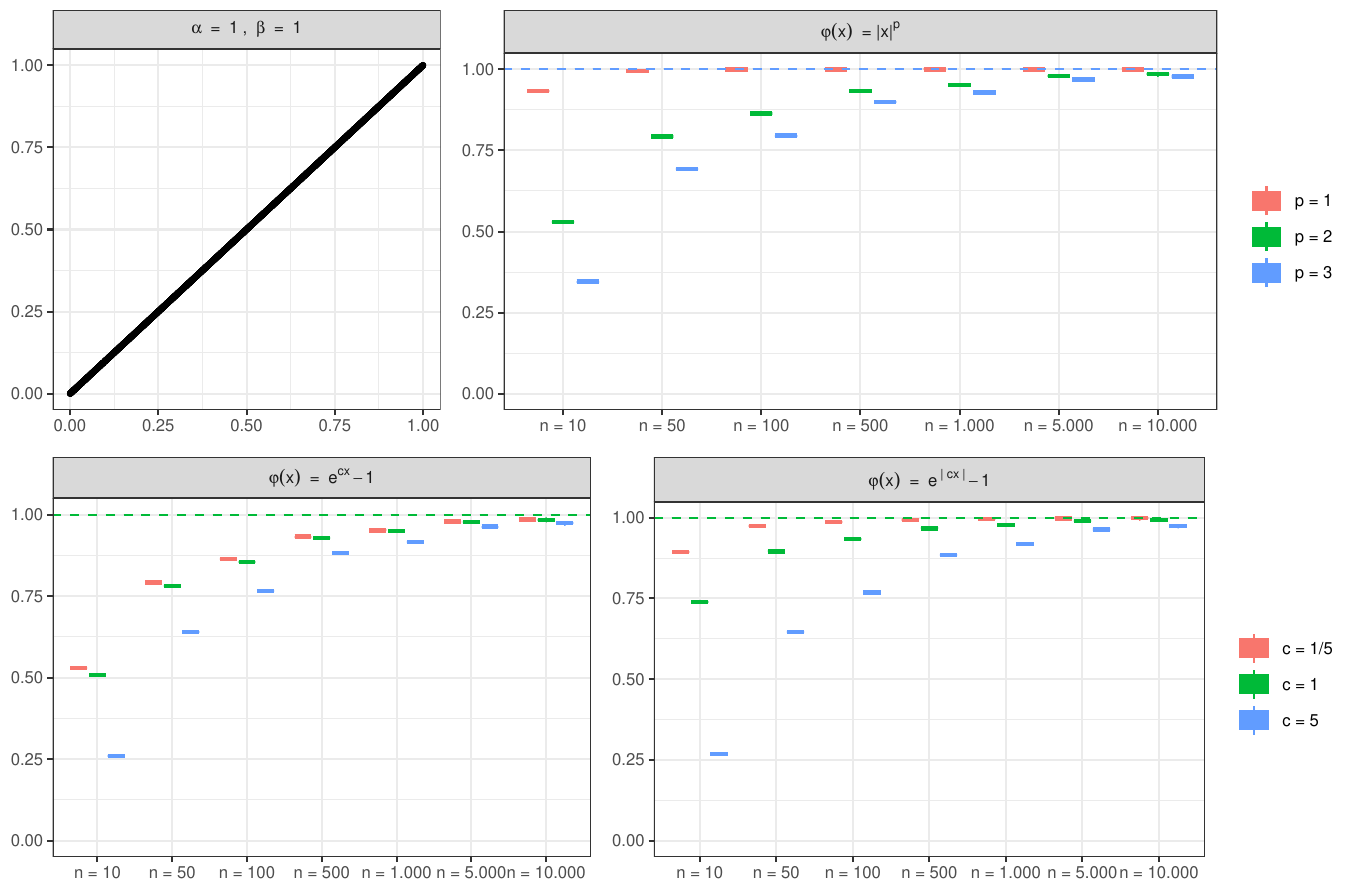}
    \caption{Simulation Results for the MO Copula with parameters $\alpha = 1$ and $\beta = 1$, i.e., for the case of perfect positive dependence.The top left panel depicts a 
    random sample of size $n=5.000$ from this distribution. 
    The other panels contain boxplots of the estimations \(\Lambda_{\varphi}(\Ch_{N(n)}(E_n))\) according to Theorem \ref{theestphi} based on $1,000$ runs for each sample size \(n\) and for each convex function $\varphi$. The dashed lines 
    represent the true values of $\Lambda_\varphi(A)$.}
    \label{MO_1_1.pdf}
\end{figure}

\begin{figure}
    \centering
    \includegraphics[width = 0.78\textwidth]{MO_0.2_0.7.pdf}
    \caption{Simulation Results for the MO Copula with parameters $\alpha = 0.2$ and $\beta = 0.7$. The top left panel depicts a 
    random sample of size $n=5.000$ from this distribution. 
    The other panels contain boxplots of the estimations \(\Lambda_{\varphi}(\Ch_{N(n)}(E_n))\) according to Theorem \ref{theestphi} based on $1,000$ runs for each sample size \(n\) and for each convex function $\varphi$. The dashed lines 
    represent the true values of $\Lambda_\varphi(A)$.}
    \label{MO_0.2_0.7.pdf}
\end{figure}

\begin{figure}
    \centering
    \includegraphics[width = 0.78\textwidth]{MO_0.3_1.pdf}
    \caption{Simulation Results for the MO Copula with $\alpha = 0.3$ and $\beta = 1$.
    The top left panel depicts a 
    random sample of size $n=5.000$ from this distribution. 
    The other panels contain boxplots of the estimations \(\Lambda_{\varphi}(\Ch_{N(n)}(E_n))\) according to Theorem \ref{theestphi} based on $1,000$ runs for each sample size \(n\) and for each convex function $\varphi$. The dashed lines 
    represent the true values of $\Lambda_\varphi(A)$.}
    \label{MO_0.3_1.pdf}
\end{figure}
\FloatBarrier
\subsection{Real Data Example}\label{sec:ex_real}
As a real data example we consider a dataset containing gathered by~\cite{physionet_daten} and available online at PhysioNet.org, see \cite{physionet}. The dataset contains, among other variables, $101$ complete observations of pregnant women's age, BMI, blood pressure, blood glucose level and thickness of visceral adipose tissue as well as the birth weight of the child. We choose the birth weight of the child as the endogenous variable \(Y\) 
and each of the other variables as the exogenous one. Figure~\ref{Data_example} 
illustrates the empirical checkerboard copula $\Ch_{N(n)}(E_n))$ for 
BMI as exogenous variable.  
Table~\ref{Example_table} summarizes the obtained values of 
$\Lambda_{\varphi}(Y|X)$ for all exogenous variables. Interpretation of the different values is not straightforward since they depend on the shape of \(\varphi\) (unless they are exactly $0$ or $1$).
Due to the small sample size the calculated estimates might still differ 
from the underlying true value. Surprisingly, however, when sorting the variables by 
their estimated $\Lambda_{\varphi}(Y|X)$ value, the order is the same for all
convex functions $\varphi$, i.e., for all choices of $\varphi$ 
the variable importance (w.r.t. $Y$) is the same. 

\begin{figure}[h!]
    \centering
    \includegraphics[width = 0.78\textwidth]{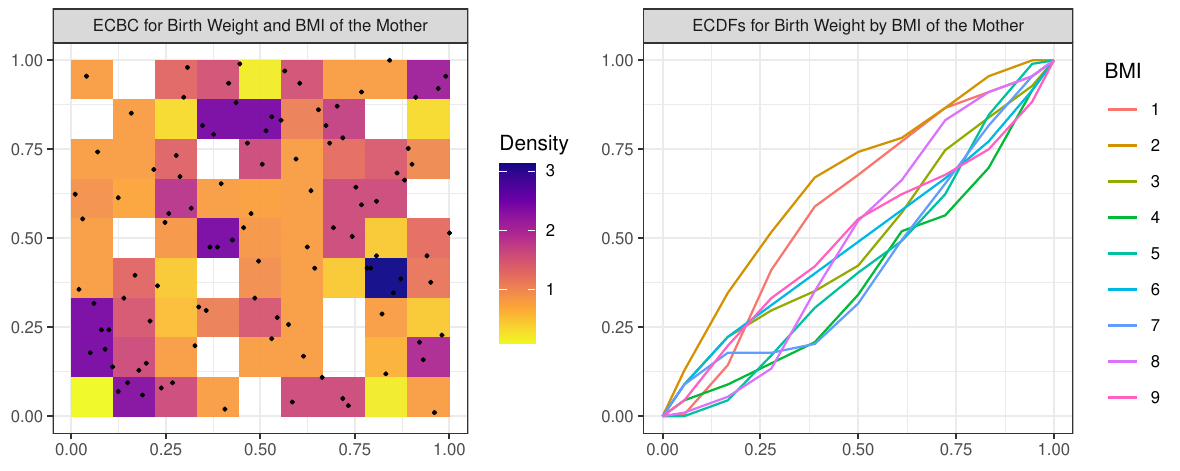}
    \caption{The left panel depicts the Empirical Checkerboard Copula $\Ch_{N(n)}(E_n))$ with resolution $N=9$ and together with the pseudo-observations (a.k.a. psuedo-observations) for Birth Weight ($y$-axis) and BMI of the mother ($x$-axis). The right plot contains the resulting conditional distribution functions $F_{Y|X=x}$ for the $9$ different BMI regions corresponding to the $9$ vertical stripes on the left plot. $\Lambda_{\Delta}(Y|X)$ is computed by taking the average pairwise distances $\Delta(F_i, F_j),\, i,j \in \{1,\ldots, 9\}$ over these $9$ ECDFs and subsequently dividing by $\alpha_{\Delta}$.}
    \label{Data_example}
\end{figure}

\begin{table}[]
    \centering
    \caption{Values of $\Lambda_{\varphi}(Y|X)$ for various choices of the exogenous variable \(X\) and for different convex functions $\varphi$. 
    In each of the cases the endogenous variable \(Y\) is the birth weight.}
   \begin{tabular}{lrrr}
        \hline
        & \multicolumn{3}{c}{$\varphi(x)$}\\
        \cline{2-4}
        Exogenous Variable & $|x|$ & $e^{|x|} - 1$ & $e^{x} - 1$\\
        \hline
        Diastolic BP &$0.354$&$0.231$&$0.072$\\
        BMI &$0.326$&$0.210$&$0.060$\\
        Visceral Adipose Tissue &$0.304$&$0.195$&$0.052$\\
        Systolic BP &$0.291$&$0.186$&$0.049$\\
        Blood Glucose &$0.214$&$0.134$&$0.027$\\
        Age &$0.137$&$0.083$&$0.012$\\
        \hline
    \end{tabular}
    \label{Example_table}
\end{table}
\FloatBarrier
\noindent
\textbf{Funding}\\
The authors gratefully acknowledge the support of the WISS 2025 project `IDA-lab Salzburg' (20204-WISS/225/197-2019 and 20102-F1901166-KZP).
The first and the third author further acknowledge the support of the Austrian Science Fund (FWF) project
{P 36155-N} \emph{ReDim: Quantifying Dependence via Dimension Reduction}.

\bibliographystyle{plain} 

\end{document}